\date{\today}
\newtheorem{theorem}{Theorem}[section]
\newtheorem{lemma}[theorem]{Lemma}
\newtheorem{proposition}[theorem]{Proposition}
\newtheorem{corollary}[theorem]{Corollary}
\theoremstyle{definition}
\newtheorem{example}[theorem]{Example}
\theoremstyle{remark}
\newtheorem{remark}[theorem]{Remark}
\numberwithin{equation}{section}
\begin{document}

\title[On locally compact shift continuous topologies on the semigroup $\boldsymbol{B}_{[0,\infty)}$ ...]{On locally compact shift continuous topologies on the semigroup $\boldsymbol{B}_{[0,\infty)}$ with an adjoined  compact ideal}

\author[O.~Gutik and M.~Khylynskyi]{Oleg~Gutik and Markian~Khylynskyi}
\address{Ivan Franko National University of Lviv, Universytetska 1, Lviv, 79000, Ukraine}
\email{oleg.gutik@lnu.edu.ua, ogutik@gmail.com, markian.khylynskyi@lnu.edu.ua}

\keywords{semigroup, semitopological semigroup, topological semigroup, locally compact, compact ideal, adjoined zero, remainder, one-point Alexandroof compactification}
\subjclass[2020]{22A15}

\begin{abstract}
Let $\boldsymbol{B}_{[0,\infty)}$ be the semigroup which is defined in the Ahre paper \cite{Ahre=1981}. The semigroup $\boldsymbol{B}_{[0,\infty)}$ with the induced usual topology $\tau_u$ from $\mathbb{R}^2$, with the topology $\tau_L$ which is generated by the natural partial order on $\boldsymbol{B}_{[0,\infty)}$, and the discrete topology  are denoted by $\boldsymbol{B}^1_{[0,\infty)}$, $\boldsymbol{B}^2_{[0,\infty)}$, and $\boldsymbol{B}^{\mathfrak{d}}_{[0,\infty)}$, respectively. We show that if $S_1^I$ ($S_2^I$) is a Hausdorff locally compact semitopological  semigroup $\boldsymbol{B}^1_{[0,\infty)}$ ($\boldsymbol{B}^2_{[0,\infty)}$) with an adjoined compact ideal $I$ then either $I$ is an open subset of $S_1^I$ ($S_2^I$) or the semigroup $S_1^I$ ($S_2^I$) is compact. Also, we proved that if $S_{\mathfrak{d}}^I$ is a Hausdorff locally compact semitopological  semigroup $\boldsymbol{B}^{\mathfrak{d}}_{[0,\infty)}$  with an adjoined compact ideal $I$ then $I$ is an open subset of $S_{\mathfrak{d}}^I$.
\end{abstract}

\maketitle


\section{Introduction and preliminaries}

In this paper we shall follow the terminology of \cite{Carruth-Hildebrant-Koch=1983, Carruth-Hildebrant-Koch=1986, Clifford-Preston=1961, Clifford-Preston=1967, Engelking=1989, Lawson=1998, Ruppert=1984}.

A semigroup $S$ is called {\it inverse} if for any
element $x\in S$ there exists a unique $x^{-1}\in S$ such that
$xx^{-1}x=x$ and $x^{-1}xx^{-1}=x^{-1}$. The element $x^{-1}$ is
called the {\it inverse of} $x\in S$. If $S$ is an inverse
semigroup, then the function $\operatorname{inv}\colon S\to S$
which assigns to every element $x$ of $S$ its inverse element
$x^{-1}$ is called the {\it inversion}.
On an inverse semigroup $S$ the semigroup operation  determines the following partial order $\preccurlyeq$: $s\preccurlyeq t$ if and only if there exists $e\in E(S)$ such that $s=te$. This partial order is called the natural partial order on $S$.

\begin{remark}\label{remark-1.1}
For arbitrary elements $s,t$ of an inverse semigroup $S$ the following conditions are equivalent:
\begin{equation*}
  (\alpha)~s\preccurlyeq t; \quad (\beta)~s=ss^{-1}t; \quad (\gamma)~s= t s^{-1}s,
\end{equation*}
(see \cite[Chap. 3]{Lawson=1998}).
\end{remark}

A topological space $X$ is called  \emph{locally compact} if every poin $x$ of $X$ has an open neighbourhood with the compact closure.

A (\emph{semi})\emph{topological} \emph{semigroup} is a topological space with a (separately) continuous semigroup operation. An inverse topological semigroup with continuous inversion is called a \emph{topological inverse semigroup}.

A topology $\tau$ on a semigroup $S$ is called:
\begin{itemize}
  \item a \emph{semigroup} topology if $(S,\tau)$ is a topological semigroup;
  \item an \emph{inverse semigroup} topology if $(S,\tau)$ is a topological inverse semigroup;
  \item a \emph{shift-continuous} topology if $(S,\tau)$ is a semitopological semigroup.
\end{itemize}

The bicyclic monoid ${\mathscr{C}}(p,q)$ is the semigroup with the identity $1$ generated by two elements $p$ and $q$ subjected only to the condition $pq=1$. The semigroup operation on ${\mathscr{C}}(p,q)$ is determined as
follows:
\begin{equation*}
    q^kp^l\cdot q^mp^n=q^{k+m-\min\{l,m\}}p^{l+n-\min\{l,m\}}.
\end{equation*}
It is well known that the bicyclic monoid ${\mathscr{C}}(p,q)$ is a bisimple (and hence simple) combinatorial $E$-unitary inverse semigroup and every non-trivial congruence on ${\mathscr{C}}(p,q)$ is a group congruence \cite{Clifford-Preston=1961}.

The bicyclic monoid admits only the discrete semigroup Hausdorff topology \cite{Eberhart-Selden=1969}. Bertman and  West in \cite{Bertman-West=1976} extended this result for the case of Hausdorff semitopological semigroups. If a Hausdorff (semi)topological semigroup $T$ contains the bicyclic monoid ${\mathscr{C}}(p,q)$ as a dense proper semigroup then $T\setminus {\mathscr{C}}(p,q)$ is a closed ideal of $T$ \cite{Eberhart-Selden=1969, Gutik=2015}. Moreover, the closure of ${\mathscr{C}}(p,q)$ in a locally compact topological inverse semigroup can be obtained (up to isomorphism) from ${\mathscr{C}}(p,q)$ by adjoining the additive group of integers in a suitable way \cite{Eberhart-Selden=1969}.

Stable and $\Gamma$-compact topological semigroups do not contain the bicyclic monoid~\cite{Anderson-Hunter-Koch=1965, Hildebrant-Koch=1986, Koch-Wallace=1957}. The problem of embedding the bicyclic monoid into compact-like topological semigroups was studied in \cite{Banakh-Dimitrova-Gutik=2009, Banakh-Dimitrova-Gutik=2010, Bardyla-Ravsky=2020, Gutik-Repovs=2007}.

In \cite{Ahre=1981}  Ahre considered the following semigroup. Let $[0,\infty)$ be the set of all non-negative real numbers. The set $\boldsymbol{B}_{[0,\infty)}=[0,\infty)\times [0,\infty)$ with the following binary operation
\begin{equation*}
  (a,b)(c,d)=(a+c-\min\{b,c\},b+d-\min\{b,c\})=
  \left\{
    \begin{array}{ll}
      (a+c-b,d), & \hbox{if~} b<c; \\
      (a,d),     & \hbox{if~} b=c; \\
      (a,b+d-c)  & \hbox{if~} b>c.
    \end{array}
  \right.
\end{equation*}
Then $\boldsymbol{B}_{[0,\infty)}$ is a bisimple inverse semigroup. The semigroup $\boldsymbol{B}_{[0,\infty)}$ and the bicyclic monoid ${\mathscr{C}}(p,q)$ are partial cases of bicyclic extensions of linearly ordered groups which are presented in \cite{Fortunatov=1976, Fotedar=1974, Fotedar=1978, Gutik-Pagon-Pavlyk=2011}. By $\boldsymbol{B}^1_{[0,\infty)}$ we denote the semigroup $\boldsymbol{B}_{[0,\infty)}$ with the usual topology. It is obvious that $\boldsymbol{B}^1_{[0,\infty)}$ is a locally compact topological inverse semigroup \cite{Ahre=1981}. In \cite{Ahre=1983, Ahre=1986} it is shown that the closure of $\boldsymbol{B}^1_{[0,\infty)}$ in a locally compact topological inverse semigroup can be obtained (up to isomorphism) from $\boldsymbol{B}^1_{[0,\infty)}$ by adjoining the additive group of reals in a suitable way.

For any non-negative real number $\alpha$ we denote the following subsets in $\boldsymbol{B}_{[0,\infty)}$:
\begin{equation*}
  L_\alpha^+=\{(x,x+\alpha)\colon x\geqslant 0\} \qquad \hbox{and} \qquad L_\alpha^-=\{(x+\alpha,x)\colon x\geqslant 0\}.
\end{equation*}
It obvious that $\boldsymbol{B}_{[0,\infty)}=\bigsqcup_{\alpha\geqslant 0}L_{\alpha}^+\sqcup \bigsqcup_{\alpha>0}L_\alpha^-$ and $L_0^+=L_0^-$. Put $\tau_L$ be a topology on $\boldsymbol{B}_{[0,\infty)}$ which is generating by the bases
\begin{equation*}
  \mathcal{B}(x,x+\alpha)=\left\{U_\varepsilon(x,x+\alpha)=\left\{(x+y,x+y+\alpha)\in L_\alpha^+\colon |y|<\varepsilon\right\}\colon \varepsilon>0\right\}
\end{equation*}
and
\begin{equation*}
  \mathcal{B}(x+\alpha,x)=\left\{U_\varepsilon(x+\alpha,x)=\left\{(x+y+\alpha,x+y)\in L_\alpha^-\colon |y|<\varepsilon\right\}\colon \varepsilon>0\right\}
\end{equation*}
at any points $(x,x+\alpha)\in L_\alpha^+$ and $(x+\alpha,x)\in L_\alpha^-$, respectively, for arbitrary $\alpha\in[0,+\infty)$. The semigroup $\boldsymbol{B}_{[0,\infty)}$ with the topology $\tau_L$ is denoted by $\boldsymbol{B}^2_{[0,\infty)}$. 
The definitions of the topology $\tau_L$ and the natural partial order on $\boldsymbol{B}_{[0,\infty)}$ imply that $\tau_L$ is generated by the natural partial order of $\boldsymbol{B}_{[0,\infty)}$ (see \cite{Gierz-Hofmann-Keimel-Lawson-Mislove-Scott=2003}).
We observe that $\boldsymbol{B}^2_{[0,\infty)}$ is a Hausdorff locally compact topological inverse semigroup \cite{Ahre=1989}. Moreover for any non-negative real number $\alpha$, $L_\alpha^+$ and $L_\alpha^-$ are open-and-closed subsets of $\boldsymbol{B}^2_{[0,\infty)}$ which are homeomorphic to $[0,+\infty)$ with the usual topology, i.e., $\boldsymbol{B}^2_{[0,\infty)}=\displaystyle\bigoplus_{\alpha\geqslant 0}L_{\alpha}^+\oplus \bigoplus_{\alpha>0}L_\alpha^-$. The closure of the topological inverse semigroup $\boldsymbol{B}^2_{[0,\infty)}$ in (locally compact) topological semigroups is studied in \cite{Ahre=1989}.

By $\boldsymbol{B}^\mathfrak{d}_{[0,\infty)}$ we denote the semigroup $\boldsymbol{B}_{[0,\infty)}$ with the discrete topology. It is obvious that $\boldsymbol{B}^\mathfrak{d}_{[0,\infty)}$ is a locally compact topological inverse semigroup.

In the paper \cite{Gutik=2015} it is proved that every Hausdorff locally compact shift-continuous topology on the bicyclic monoid with adjoined zero is either compact or discrete. This result was extended by Bardyla onto the a polycyclic monoid \cite{Bardyla=2016} and graph inverse semigroups \cite{Bardyla=2018}, and by Mokrytskyi onto the monoid of order isomorphisms between principal filters of $\mathbb{N}^n$ with adjoined zero \cite{Mokrytskyi=2019}.
In \cite{Gutik-Khylynskyi=2022} the results of paper \cite{Gutik=2015} onto the monoid $\mathbf{I}\mathbb{N}_{\infty}$ of all partial cofinite isometries of positive integers with adjoined zero are extended. In \cite{Gutik-Mykhalenych=2023} the similar dichotomy was proved for so called bicyclic extensions $\boldsymbol{B}_{\omega}^{\mathscr{F}}$  when a family $\mathscr{F}$ consists of inductive non-empty subsets of~$\omega$.
Algebraic properties on a group $G$ such that if the discrete group $G$ has these properties then every locally compact shift continuous topology on
$G$ with adjoined zero is either compact or discrete studied in \cite{Maksymyk=2019}. Also, in \cite{Gutik-Maksymyk=2019} it is proved that the extended bicyclic semigroup $\mathscr{C}_\mathscr{\mathbb{Z}}^0$ with adjoined zero admits distinct $\mathfrak{c}$-many  shift-continuous topologies, however every Hausdorff locally compact semigroup topology on $\mathscr{C}_\mathscr{\mathbb{Z}}^0$ is discrete. In \cite{Bardyla=2023} Bardyla proved that a Hausdorff locally compact semitopological semigroup McAlister Semigroup $\mathcal{M}_1$ is either compact or discrete. However, this dichotomy does not hold for the McAlister Semigroup $\mathcal{M}_2$ and moreover, $\mathcal{M}_2$ admits continuum many different Hausdorff locally compact inverse semigroup topologies \cite{Bardyla=2023}.

In this paper we extend the results of paper \cite{Gutik=2015} onto the topological monoids $\boldsymbol{B}^1_{[0,\infty)}$ and $\boldsymbol{B}^2_{[0,\infty)}$. In particular we show that if $S_1^I$ ($S_2^I$) is a Hausdorff locally compact semitopological  semigroup $\boldsymbol{B}^1_{[0,\infty)}$ ($\boldsymbol{B}^2_{[0,\infty)}$) with an adjoined compact ideal $I$ then either $I$ is an open subset of $S_1^I$ ($S_2^I$) or the semigroup $S_1^I$ ($S_2^I$) is compact. Also, we proved that if $S_{\mathfrak{d}}^I$ is a Hausdorff locally compact semitopological  semigroup $\boldsymbol{B}^{\mathfrak{d}}_{[0,\infty)}$  with an adjoined compact ideal $I$ then $I$ is an open subset of $S_{\mathfrak{d}}^I$.

\section{A locally compact semigroup $\boldsymbol{B}^1_{[0,\infty)}$ with an adjoined compact ideal}

Later in this section  by $S_1^I$ we denote a Hausdorff locally compact semitopological  semigroup which is the semigroup $\boldsymbol{B}^1_{[0,\infty)}$ with an adjoined non-open  compact ideal $I$.

\begin{lemma}\label{lemma-2.1}
Let $S$ be a  Hausdorff locally compact semitopological semigroup with a compact ideal $I$.
Then for any open neighbourhood $U(I)$ of the ideal $I$ and any  $x\in S$ there exists an open neighbourhood $V(I)$ of $I$ with the compact closure $\overline{V(I)}$ such that $x\cdot V(I)\subseteq U(I)$ and $V(I)\cdot x\subseteq U(I)$.
\end{lemma}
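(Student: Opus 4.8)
The plan is to exploit three ingredients: the ideal property of $I$, the separate continuity of the multiplication, and the compactness of $I$ together with local compactness of $S$. Since $I$ is an ideal, for the fixed element $x\in S$ we have $x\cdot I\subseteq I$ and $I\cdot x\subseteq I$, so in particular $x\cdot i\in I\subseteq U(I)$ and $i\cdot x\in I\subseteq U(I)$ for every $i\in I$. The idea is then to produce, around each point of $I$, a small open set that left- and right-multiplication by $x$ sends into $U(I)$, glue finitely many of these together using compactness of $I$, and finally shrink the resulting neighbourhood to one with compact closure using local compactness.

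First I would fix $x\in S$ and consider the translations $\lambda_x\colon y\mapsto x\cdot y$ and $\rho_x\colon y\mapsto y\cdot x$. Since $S$ is a semitopological semigroup and $x$ is held fixed, $\lambda_x$ and $\rho_x$ are continuous self-maps of $S$; this is exactly the point where separate continuity already suffices. For each $i\in I$ we have $\lambda_x(i)\in U(I)$ and $\rho_x(i)\in U(I)$, so by continuity there exist open neighbourhoods of $i$ whose images under $\lambda_x$, respectively $\rho_x$, lie in $U(I)$. Intersecting them yields an open set $O_i\ni i$ with $x\cdot O_i\subseteq U(I)$ and $O_i\cdot x\subseteq U(I)$.

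Next I would invoke compactness. The family $\{O_i\colon i\in I\}$ is an open cover of the compact set $I$, hence it admits a finite subcover $O_{i_1},\dots,O_{i_n}$; put $W=\bigcup_{k=1}^{n}O_{i_k}$. Then $W$ is an open neighbourhood of $I$, and since $x\cdot W=\bigcup_{k}x\cdot O_{i_k}\subseteq U(I)$ and likewise $W\cdot x\subseteq U(I)$, the set $W$ already enjoys the required multiplicative property. It remains to arrange compact closure: because $S$ is locally compact and Hausdorff and $I$ is compact, the standard fact that a compact set possesses a base of open neighbourhoods with compact closures inside any prescribed open superset furnishes an open $V(I)$ with $I\subseteq V(I)\subseteq\overline{V(I)}\subseteq W$ and $\overline{V(I)}$ compact. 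Then $x\cdot V(I)\subseteq x\cdot W\subseteq U(I)$ and $V(I)\cdot x\subseteq W\cdot x\subseteq U(I)$, which is what we want.

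I do not expect any genuine obstacle here, as the argument is a routine combination of the ideal property (which trivialises the inclusions $x\cdot I\subseteq U(I)$ and $I\cdot x\subseteq U(I)$), separate continuity (to localise at each point of $I$), and a compactness-plus-local-compactness step (to pass to a single neighbourhood with compact closure). The only point deserving mild care is to confirm that separate continuity is sufficient; it is, precisely because $x$ remains fixed throughout and only the one-variable translations $\lambda_x$ and $\rho_x$ are used.
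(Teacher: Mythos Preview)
Your proof is correct and follows essentially the same strategy as the paper's: use the ideal property, separate continuity, compactness of $I$, and local compactness of $S$. The only cosmetic differences are that the paper treats the left and right translations separately (building neighbourhoods $V_1(I)$ and $V_2(I)$ and then intersecting them) and arranges compact closure pointwise from the outset, whereas you intersect the left/right neighbourhoods at each $i\in I$ first and shrink to a set with compact closure at the end.
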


\begin{proof}
Fix an arbitrary open neighbourhood $U(I)$ of the ideal $I$ and any  $x\in S$. Since $I$ is an ideal of $S$, for any $\alpha\in I$ there exists $\beta\in I$ such that $x\cdot \alpha=\beta$. Since $U(I)$ is an open neighbourhood of $\beta$, separate continuity of the semigroup operation in $S$ implies that there exists an open neighbourhood $V(\alpha)$ of $\alpha$ in $S$ such that $x\cdot V(\alpha)\subseteq U(I)$. The local compactness of the space $S$ implies that without loss of generality we may assume that the neighbourhood $V(\alpha)$ has the compact closure $\overline{V(\alpha)}$. Then the family $\{V(\alpha)\colon \alpha\in I\}$ is an open cover of $I$. Since $I$ is compact,  $I\subseteq V(\alpha_1)\cup\ldots\cup V(\alpha_n)$ for finitely many $\alpha_1,\ldots,\alpha_n\in I$. Put $V_1(I)=V(\alpha_1)\cup\ldots\cup V(\alpha_n)$. Then $\overline{V_1(I)}=\overline{V(\alpha_1)}\cup\ldots\cup \overline{V(\alpha_n)}$ is a compact subset of $S$ such that $x\cdot V_1(I)\subseteq U(I)$. Similarly we get that there exists an open neighbourhood $V_2(I)$ of $I$ with the compact closure $\overline{V_2(I)}$ such that $V_2(I)\cdot x\subseteq U(I)$. Put $V(I)=V_1(I)\cap V_2(I)$. Then $V(I)$ is an open neighbourhood of $I$ with the compact closure $\overline{V(I)}=\overline{V_1(I)}\cap \overline{V_2(I)}$ such that $x\cdot V(I)\subseteq U(I)$ and $V(I)\cdot x\subseteq U(I)$.
\end{proof}

A subset $A$ of $\boldsymbol{B}_{[0,\infty)}$ is called \emph{unbounded} if for any positive real number $a$ there exist $(x,y)\in A$ such that  $x\geqslant a$ and $y\geqslant a$.

\begin{lemma}\label{lemma-2.2}
For any open neighbourhood $U(I)$ of the ideal $I$ in $S_1^I$ the set $U(I)\cap \boldsymbol{B}_{[0,\infty)}$ is unbounded.
\end{lemma}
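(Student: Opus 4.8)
The plan is to exploit the two one-sided translations of $\boldsymbol{B}_{[0,\infty)}$ that raise a single coordinate, combined with Lemma~\ref{lemma-2.1}. A direct computation with the operation of $\boldsymbol{B}_{[0,\infty)}$ shows that left multiplication by $(a,0)$ acts as $(a,0)(c,d)=(a+c,d)$ and right multiplication by $(0,a)$ acts as $(c,d)(0,a)=(c,d+a)$, since in both products the relevant minimum is $0$. Hence these translations increase, respectively, the first and the second coordinate by $a$ while fixing the other, and in particular they never decrease either coordinate. This is exactly the mechanism needed to manufacture, out of a single point of $\boldsymbol{B}_{[0,\infty)}$, a point whose two coordinates are both at least $a$.

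Fix $a>0$. First I would apply Lemma~\ref{lemma-2.1} to the neighbourhood $U(I)$ and the element $x_1=(a,0)$ to obtain an open neighbourhood $V_1(I)$ of $I$ with $x_1\cdot V_1(I)\subseteq U(I)$. Then I would apply Lemma~\ref{lemma-2.1} once more, now to $V_1(I)$ and $x_2=(0,a)$, to obtain an open neighbourhood $V_2(I)$ of $I$ with $V_2(I)\cdot x_2\subseteq V_1(I)$. The crucial auxiliary fact is that $V_2(I)$ must meet $\boldsymbol{B}_{[0,\infty)}$: since $I$ is a non-open subset of $S_1^I$, its complement $\boldsymbol{B}_{[0,\infty)}$ is not closed, so some point of $I$ lies in the closure of $\boldsymbol{B}_{[0,\infty)}$, and every neighbourhood of $I$, in particular $V_2(I)$, therefore contains points of $\boldsymbol{B}_{[0,\infty)}$.

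Choosing any $(c,d)\in V_2(I)\cap\boldsymbol{B}_{[0,\infty)}$, I would chase it through the two inclusions: $(c,d)\cdot x_2=(c,d+a)\in V_1(I)$, and therefore $x_1\cdot\bigl((c,d)\cdot x_2\bigr)=(a,0)(c,d+a)=(a+c,d+a)\in U(I)$. The resulting point $(a+c,d+a)$ lies in $\boldsymbol{B}_{[0,\infty)}$ and satisfies $a+c\geqslant a$ and $d+a\geqslant a$, which is precisely what the definition of an unbounded set demands for the given $a$. As $a>0$ was arbitrary, this proves that $U(I)\cap\boldsymbol{B}_{[0,\infty)}$ is unbounded.

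I do not expect a serious obstacle: the argument is short once the coordinate-raising translations are spotted. The two points requiring care are the non-emptiness of $V_2(I)\cap\boldsymbol{B}_{[0,\infty)}$, which is exactly where the hypothesis that $I$ is non-open enters, and the bookkeeping that the two applications of Lemma~\ref{lemma-2.1} compose correctly (the second neighbourhood being built relative to the first). An essentially equivalent route is to first prove, using the single right translation $(0,a)$, that the second coordinates occurring in an arbitrary neighbourhood of $I$ are unbounded, and then finish with one left translation; but composing the two translations as above is the most economical.
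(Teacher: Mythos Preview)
Your proof is correct and rests on the same mechanism as the paper's: the one-sided translations $(a,0)\cdot(-)$ and $(-)\cdot(0,a)$ raise one coordinate while fixing the other, and combined with Lemma~\ref{lemma-2.1} and the non-openness of $I$ they produce a point of $U(I)\cap\boldsymbol{B}_{[0,\infty)}$ with both coordinates at least $a$.

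The only real difference is organizational. The paper argues by contradiction and invokes Lemma~\ref{lemma-2.1} once to obtain a single $V(I)$ with both $(2m,0)\cdot V(I)\subseteq U(I)$ and $V(I)\cdot(0,2m)\subseteq U(I)$, then asserts the existence of a point with both coordinates large. Your direct argument, with two nested neighbourhoods $V_2(I)\subseteq V_1(I)$, makes the chain $(a,0)\cdot\bigl((c,d)\cdot(0,a)\bigr)=(a+c,d+a)\in U(I)$ fully explicit; this is arguably cleaner, since with the paper's single $V(I)$ one cannot immediately place the composed product $(2m,0)\cdot(c,d)\cdot(0,2m)$ inside $U(I)$ without exactly the nesting step you supply.
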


\begin{proof}
Suppose to the contrary that there exists a positive real number $m$ such that $x<m$ or $y<m$ for any $(x,y)\in U(I)\cap \boldsymbol{B}_{[0,\infty)}$. Lemma~\ref{lemma-2.1} implies that there exists an open neighbourhood $V(I)\subseteq U(0)$ of $I$ such that
\begin{equation*}
  V(I)\cdot (0,2m)\subseteq U(I) \qquad \hbox{and} \qquad  (2m,0)\cdot V(I)\subseteq U(I).
\end{equation*}
Since the ideal $I$ is a non-open subset of $S_1^I$, we have that
\begin{equation*}
  (V(I)\cdot (0,2m))\cap U(I)\neq I\neq ((2m,0)\cdot V(I))\cap U(I).
\end{equation*}
Then the definition of the semigroup operation on $\boldsymbol{B}_{[0,\infty)}$ implies that there exists $(a,b)\in U(I)$ such that $a>m$ and $b>m$, which implies the statement of the lemma.
\end{proof}

\begin{proposition}\label{proposition-2.3}
For any open neighbourhood $U(I)$ of the ideal $I$ in $S_1^I$ there exists a compact subset $A_a=[0,a]\times[0,a]$ in $\boldsymbol{B}^1_{[0,\infty)}$ such that $S_1^I\setminus U(I)\subseteq A_a$.
\end{proposition}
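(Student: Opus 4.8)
The plan is to obtain the bound directly, using only local compactness of $S_1^I$, the connectedness of $\boldsymbol{B}_{[0,\infty)}$, and Lemma~\ref{lemma-2.2}; no iteration of translations is required.

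First I would choose, by local compactness of $S_1^I$ together with compactness of $I$, an open neighbourhood $V(I)$ of $I$ with $I\subseteq V(I)\subseteq U(I)$ whose closure $\overline{V(I)}$ is compact. Concretely, cover $I$ by finitely many relatively compact open sets contained in $U(I)$ and let $V(I)$ be their union. The frontier $\partial V(I)=\overline{V(I)}\setminus V(I)$ is a closed subset of the compact set $\overline{V(I)}$, hence compact, and it is disjoint from $I$ because $I\subseteq V(I)$ and $V(I)$ is open. Thus $\partial V(I)$ is a compact subset of the subspace $\boldsymbol{B}^1_{[0,\infty)}$, and since that subspace carries the usual topology, $\partial V(I)$ is closed and bounded in $[0,\infty)\times[0,\infty)$; so $\partial V(I)\subseteq A_c=[0,c]\times[0,c]$ for some $c>0$.

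Next set $G=\{(x,y)\in\boldsymbol{B}_{[0,\infty)}\colon x>c \text{ or } y>c\}=\boldsymbol{B}_{[0,\infty)}\setminus A_c$. The set $G$ is connected, being the union of the two convex sets $\{(x,y)\colon x>c\}$ and $\{(x,y)\colon y>c\}$, which meet in $\{(x,y)\colon x>c,\ y>c\}\neq\varnothing$; and $G\cap\partial V(I)=\varnothing$ by the choice of $c$. I would then observe that $G\cap V(I)$ is clopen in $G$: it is open because $V(I)$ is open in $S_1^I$, and it is closed because $G\cap V(I)=G\cap\overline{V(I)}$ (a point of $G\cap\overline{V(I)}$ lies in $\overline{V(I)}=V(I)\sqcup\partial V(I)$ but not in $\partial V(I)$, hence in $V(I)$) together with the fact that $\overline{V(I)}$ is closed. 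By connectedness of $G$, either $G\cap V(I)=\varnothing$ or $G\cap V(I)=G$.

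The first alternative would give $V(I)\cap\boldsymbol{B}_{[0,\infty)}\subseteq A_c$, contradicting Lemma~\ref{lemma-2.2}, which asserts that $V(I)\cap\boldsymbol{B}_{[0,\infty)}$ is unbounded. Hence $G\subseteq V(I)\subseteq U(I)$, so that $S_1^I\setminus U(I)\subseteq S_1^I\setminus G=I\cup A_c$; since $I\subseteq U(I)$, the complement is disjoint from $I$, whence $S_1^I\setminus U(I)\subseteq A_c$, and we take $a=c$. The step I expect to be most delicate is the clopen argument on the connected set $G$, since that is precisely what upgrades the easy fact that the \emph{frontier} $\partial V(I)$ is bounded to the desired conclusion that the whole \emph{complement} $S_1^I\setminus U(I)$ is bounded; verifying that $\partial V(I)$ is a genuinely compact, hence bounded, subset of $\boldsymbol{B}^1_{[0,\infty)}$ is the other point that needs care, while everything else is routine.
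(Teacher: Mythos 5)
Your proof is correct, and it rests on exactly the same two ingredients as the paper's argument: the frontier $\overline{V(I)}\setminus V(I)$ is a compact, hence bounded, subset of $\boldsymbol{B}^1_{[0,\infty)}$, and Lemma~\ref{lemma-2.2} combined with the connectedness of the region outside a box $A_c$ forces the entire complement of $U(I)$ into that box. The paper packages this as a proof by contradiction, extracting an unbounded sequence of frontier points inside a compact set, while you run the connectedness step directly and make explicit (via the clopen decomposition of $G$) the passage from ``the frontier is bounded'' to ``the complement is bounded'' that the paper leaves implicit; the substance is the same.
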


\begin{proof}
Suppose to the contrary that there exists an open  neighbourhood $U(I)$ of the ideal $I$ in $S_1^I$ such that $S_1^I\setminus U(I)\nsubseteq A_n$ for any positive integer $n$. By Lemma~\ref{lemma-2.1} without loss of generality we may assume that the closure $\overline{U(I)}$ is a compact subset of $S_1^I$. By Lemma~\ref{lemma-2.2} the set  $U(I)\cap \boldsymbol{B}_{[0,\infty)}$ is unbounded in $\boldsymbol{B}_{[0,\infty)}$. Since $\boldsymbol{B}^1_{[0,\infty)}\cap U(I)$ is an open subset in $\boldsymbol{B}^1_{[0,\infty)}$, the assumption of the proposition implies that for any positive integer $n$ there exists an element $(x_n,y_n)\in \overline{U(I)}\setminus U(I)$ such that $(x_n,y_n)\notin A_n$. This implies that the set $\overline{U(I)}\setminus U(I)$ is unbounded in $\boldsymbol{B}_{[0,\infty)}$. But $\overline{U(I)}\setminus U(I)$ is a compact subspace of the metric space $\boldsymbol{B}^1_{[0,\infty)}$, a contradiction.
\end{proof}

Proposition \ref{proposition-2.3}  implies the following theorem.

\begin{theorem}\label{theorem-2.4}
Let $S_1^I$ be a Hausdorff locally compact semitopological  semigroup $\boldsymbol{B}^1_{[0,\infty)}$ with an adjoined compact ideal $I$. Then either $I$ is an open subset of $S_1^I$ or the semigroup $S_1^I$ is compact.
\end{theorem}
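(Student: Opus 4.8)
The plan is to derive Theorem~\ref{theorem-2.4} directly from Proposition~\ref{proposition-2.3} by a simple case split on whether the ideal $I$ is open. If $I$ is open in $S_1^I$, there is nothing to prove; so assume $I$ is not open, which places us precisely in the setting fixed at the start of the section where $S_1^I$ denotes the semigroup with a \emph{non-open} compact ideal. In this case I claim $S_1^I$ is compact, and the strategy is to exhibit $S_1^I$ as the union of two compact pieces.

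First I would fix an open neighbourhood $U(I)$ of the ideal whose closure $\overline{U(I)}$ is compact; such a neighbourhood exists by local compactness of $S_1^I$, and after intersecting with a set provided by Lemma~\ref{lemma-2.1} we may assume $\overline{U(I)}$ is compact. Then I would apply Proposition~\ref{proposition-2.3} to this $U(I)$ to obtain a real number $a>0$ with $S_1^I\setminus U(I)\subseteq A_a=[0,a]\times[0,a]$. The key observation is that $A_a$ is a closed and bounded subset of $\boldsymbol{B}^1_{[0,\infty)}$, hence compact in the usual topology; since $\boldsymbol{B}^1_{[0,\infty)}$ carries the subspace topology from $S_1^I$ and $A_a\subseteq \boldsymbol{B}_{[0,\infty)}$, the set $A_a$ is also compact as a subspace of $S_1^I$.

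With these two facts in hand the conclusion is immediate: we have the covering
\begin{equation*}
  S_1^I = \overline{U(I)}\cup A_a,
\end{equation*}
because every point of $S_1^I$ either lies in $U(I)\subseteq\overline{U(I)}$ or lies in the complement $S_1^I\setminus U(I)\subseteq A_a$. Both $\overline{U(I)}$ and $A_a$ are compact, so their union $S_1^I$ is compact, which completes the proof in the non-open case and hence establishes the dichotomy.

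I do not anticipate a genuine obstacle here, since the substantive work has already been carried out in Proposition~\ref{proposition-2.3}; the only point demanding a little care is the verification that compactness of $A_a$ as computed in $\boldsymbol{B}^1_{[0,\infty)}$ transfers to compactness in $S_1^I$, which follows because $A_a$ sits inside $\boldsymbol{B}_{[0,\infty)}$ and the inclusion $\boldsymbol{B}^1_{[0,\infty)}\hookrightarrow S_1^I$ is a topological embedding onto an open subspace. One should also confirm that the $a$ furnished by Proposition~\ref{proposition-2.3} does not depend on having first arranged $\overline{U(I)}$ to be compact, but since the proposition is stated for an arbitrary $U(I)$ this causes no difficulty.
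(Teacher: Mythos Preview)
Your proposal is correct and is precisely the argument the paper has in mind: the paper's proof consists of the single line ``Proposition~\ref{proposition-2.3} implies the following theorem,'' and your write-up simply makes explicit the covering $S_1^I=\overline{U(I)}\cup A_a$ by two compact sets that this implication rests on. The only cosmetic point is that the existence of an open neighbourhood of $I$ with compact closure follows directly from compactness of $I$ together with local compactness of $S_1^I$ (cover $I$ by finitely many relatively compact open sets), so the appeal to Lemma~\ref{lemma-2.1} there is not really needed.
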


Example \ref{example-2.5} and Proposition \ref{proposition-2.6} show that if the ideal $I$ of the semigroup $S_1^I$ is trivial, i.e., the ideal $I$ is a singleton, then the semigroup  $S_1^I$ admits the unique Hausdorff compact shift-continuous topology.

\begin{example}\label{example-2.5}
Let $S^{\boldsymbol{0}}_1$ be the semigroup $\boldsymbol{B}^1_{[0,\infty)}$ with an adjoined zero $\boldsymbol{0}$.
We extend the topology of  $\boldsymbol{B}^1_{[0,\infty)}$  up to a compact topology $\tau^1_{\textsf{Ac}}$ on $S^{\boldsymbol{0}}_1$ in the following way. We define
\begin{equation*}
  \mathscr{B}^1_{\textsf{Ac}}(\boldsymbol{0})=\left\{U_n(\boldsymbol{0})=\left\{0\right\}\cup\left\{(x,y)\colon x>n \hbox{~or~} y>n\right\}\colon n\in\mathbb{N}\right\}
\end{equation*}
is the system of open neighbourhoods of zero in $\tau^1_{\textsf{Ac}}$.
\end{example}

\begin{proposition}\label{proposition-2.6}
$(S^{\boldsymbol{0}}_1,\tau^1_{\textsf{Ac}})$ is a compact Hausdorff semitopological semigroup with continuous inversion.
\end{proposition}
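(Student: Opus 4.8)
The plan is to verify the four assertions in turn: compactness, the Hausdorff property, separate continuity of the multiplication, and continuity of the inversion. The first two I would dispose of simultaneously by recognizing that $\tau^1_{\textsf{Ac}}$ is nothing but the one-point Alexandroff compactification of $\boldsymbol{B}^1_{[0,\infty)}$. Indeed, each basic neighbourhood is $U_n(\boldsymbol{0})=\{\boldsymbol{0}\}\cup(\boldsymbol{B}_{[0,\infty)}\setminus A_n)$ with $A_n=[0,n]\times[0,n]$, and since every compact subset of $[0,\infty)\times[0,\infty)$ is bounded and hence contained in some $A_n$, the family $\{U_n(\boldsymbol{0})\colon n\in\mathbb{N}\}$ is a neighbourhood base at the added point for the Alexandroff topology. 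As $\boldsymbol{B}^1_{[0,\infty)}$ is locally compact, Hausdorff and noncompact, its one-point compactification is compact and Hausdorff, giving both properties for $(S^{\boldsymbol{0}}_1,\tau^1_{\textsf{Ac}})$ at once.

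I would treat the inversion next, as it is immediate. The inverse in $\boldsymbol{B}_{[0,\infty)}$ is the coordinate swap $(a,b)^{-1}=(b,a)$ (a routine check that $(a,b)(b,a)(a,b)=(a,b)$ and dually), with $\boldsymbol{0}^{-1}=\boldsymbol{0}$. On $\boldsymbol{B}^1_{[0,\infty)}$ the swap is a homeomorphism, so only continuity at $\boldsymbol{0}$ remains; but each $U_n(\boldsymbol{0})$ is invariant under interchanging the two coordinates, so $\operatorname{inv}(U_n(\boldsymbol{0}))=U_n(\boldsymbol{0})$ and continuity at $\boldsymbol{0}$ follows with the same neighbourhood.

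The main work, and the step I expect to require the most care, is separate continuity of the multiplication at the adjoined zero. For a fixed factor $s$ I would show both translations $\lambda_s$ and $\rho_s$ are continuous on all of $S^{\boldsymbol{0}}_1$. When $s=\boldsymbol{0}$ the translation is the constant map $\boldsymbol{0}$; when $s=(a,b)\in\boldsymbol{B}_{[0,\infty)}$, continuity at points of $\boldsymbol{B}_{[0,\infty)}$ is inherited from the topological semigroup $\boldsymbol{B}^1_{[0,\infty)}$ (and a product of two elements of $\boldsymbol{B}_{[0,\infty)}$ never equals $\boldsymbol{0}$), so everything reduces to continuity at $\boldsymbol{0}$. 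Here I would use the elementary estimates coming from $\min\{b,c\}\leqslant b$: in $(a,b)(c,d)=(a+c-\min\{b,c\},\,b+d-\min\{b,c\})$ the second coordinate is at least $d$ and the first is at least $a+c-b$. Consequently, taking $m=n+\lceil b\rceil$ forces $(a,b)\cdot U_m(\boldsymbol{0})\subseteq U_n(\boldsymbol{0})$: if $d>m\geqslant n$ the second coordinate exceeds $n$, while if $c>m$ the first coordinate exceeds $a+m-b\geqslant n$. A symmetric estimate for $(c,d)(a,b)=(c+a-\min\{d,a\},\,d+b-\min\{d,a\})$, whose first coordinate is at least $c$ and whose second is at least $d+b-a$, gives $U_m(\boldsymbol{0})\cdot(a,b)\subseteq U_n(\boldsymbol{0})$ with $m=n+\lceil a\rceil$. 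This establishes separate continuity, and the only genuine difficulty is the bookkeeping of the two cases $c>m$ and $d>m$.

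Finally I would note that the result is sharp in the sense that the multiplication is only separately, not jointly, continuous, consistent with the statement asserting a semitopological rather than a topological semigroup. Concretely, for any $m$ the points $(0,M)$ and $(M,0)$ with $M>m$ both lie in $U_m(\boldsymbol{0})$, yet $(0,M)(M,0)=(0,0)\notin U_n(\boldsymbol{0})$, so no single $m$ can serve uniformly over both factors. Thus the local compactness and the Hausdorff property are packaged entirely into the Alexandroff description of $\tau^1_{\textsf{Ac}}$, while the semigroup-theoretic content of the proposition is exactly the pair of coordinate estimates above.
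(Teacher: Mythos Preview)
Your proof is correct and follows essentially the same approach as the paper: both identify $\tau^1_{\textsf{Ac}}$ as the one-point Alexandroff compactification of $\boldsymbol{B}^1_{[0,\infty)}$, use the symmetry $(U_n(\boldsymbol{0}))^{-1}=U_n(\boldsymbol{0})$ for the inversion, and reduce separate continuity to an inclusion $(a,b)\cdot U_m(\boldsymbol{0})\subseteq U_n(\boldsymbol{0})$ for suitable $m$. Your uniform estimates via $\min\{b,c\}\leqslant b$ (taking $m=n+\lceil b\rceil$) are a bit tidier than the paper's three-case analysis establishing $(x,y)\cdot U_{2n}(\boldsymbol{0})\subseteq U_n(\boldsymbol{0})$ for $n>\max\{x,y\}+1$, and your closing remark on the failure of joint continuity is a nice addition not present in the paper, but the substance is the same.
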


\begin{proof}
By \cite{Ahre=1983, Ahre=1986}, $\boldsymbol{B}^1_{[0,\infty)}$ is a topological inverse semigroup, and hence  it sufficient to show that the  semigroup operation on  $(S^{\boldsymbol{0}}_1,\tau^1_{\textsf{Ac}})$ is separately continuous at zero.

It is obvious that $\boldsymbol{0}\cdot U_n(\boldsymbol{0})=U_n(\boldsymbol{0})\cdot \boldsymbol{0}=\{\boldsymbol{0}\}\subseteq U_n(\boldsymbol{0})$ for any positive integer $n$.

Next we shall show that $(x,y)\cdot U_{2n}(\boldsymbol{0})\subseteq U_n(\boldsymbol{0})$ for any positive integer $n>\max\{x, y\}+1$. We consider the possible cases.

1. Suppose that $a>2n$. Then for any  $b\in\mathbb{R}$ the equality
\begin{equation}\label{eq-2.1}
  (x,y)(a,b)=
  \left\{
    \begin{array}{ll}
      (x-y+a,b), & \hbox{if~} y<a; \\
      (x,b),     & \hbox{if~} y=a; \\
      (x,y-a+b), & \hbox{if~} y>a,
    \end{array}
  \right.
\end{equation}
implies that
$
  (x,y)(a,b)=(x-y+a,b).
$
By the assumptions $n>\max\{x, y\}+1$ and $a>2n$, we get that
$
x-y+a>-n+2n=n,
$
and hence $(x-y+a,b)\in U_n(\boldsymbol{0})$.

2. Suppose that $n\leqslant a\leqslant 2n$ and $b>2n$. By \eqref{eq-2.1} we have that
$
  (x,y)(a,b)=(x-y+a,b).
$
The assumption $n>\max\{x, y\}+1$ implies that
$
x-y+a>-n+n=0.
$
Since $b>2n$ we get that $(x-y+a,b)\in U_n(\boldsymbol{0})$.

3. Suppose that $0\leqslant a< n$ and $b>2n$. By \eqref{eq-2.1} we have that
\begin{equation*}
(x,y)(a,b)=(x-y+a,b)\in U_n(\boldsymbol{0})
\end{equation*}
in the case when $y< a$, and if $y\geqslant a$ then
$
y-a+b>2n,
$
and hence $(x,y-a+b)\in U_n(\boldsymbol{0})$.

 Similar arguments and the equality
\begin{equation*}
  (a,b)(x,y)=
  \left\{
    \begin{array}{ll}
      (a-b+x,y), & \hbox{if~} b<x; \\
      (a,y),     & \hbox{if~} b=x; \\
      (a,b-x+y)  & \hbox{if~} b>x,
    \end{array}
  \right.
\end{equation*}
imply that for any positive integer $n>\max\{x, y\}+1$ the inclusion $U_{2n}(\boldsymbol{0})\cdot(x,y) \subseteq U_n(\boldsymbol{0})$ holds. The above inclusions imply that the semigroup operation on $(S^{\boldsymbol{0}}_1,\tau^1_{\textsf{Ac}})$ is separate continuous.

Since $\left(U_{n}(\boldsymbol{0})\right)^{-1}=U_{n}(\boldsymbol{0})$ for any $n\in\mathbb{N}$ the inversion on $(S^{\boldsymbol{0}}_1,\tau^1_{\textsf{Ac}})$ is continuous.

It is obvious that $\tau^1_{\textsf{Ac}}$ is a compact Hausdorff topology on $S^{\boldsymbol{0}}_1$. Moreover $(S^{\boldsymbol{0}}_1,\tau^1_{\textsf{Ac}})$ is  the one-point Alexandroff compactification of the locally compact space $\boldsymbol{B}^1_{[0,\infty)}$ such that the singleton set $\{\boldsymbol{0}\}$ which consists of the zero of $S^{\boldsymbol{0}}_1$ is its remainder.
\end{proof}

Theorem \ref{theorem-2.4} and Proposition \ref{proposition-2.6} imply the following theorem.

\begin{theorem}\label{theorem-2.7}
Let $S^{\boldsymbol{0}}_1$ be a Hausdorff locally compact semitopological  semigroup $\boldsymbol{B}^1_{[0,\infty)}$ with an adjoined zero $\boldsymbol{0}$. Then either $\boldsymbol{0}$ is an isolated point of $S^{\boldsymbol{0}}_1$ or the topology of $S^{\boldsymbol{0}}_1$ coincides with $\tau^1_{\textsf{Ac}}$.
\end{theorem}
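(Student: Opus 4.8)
The plan is to derive Theorem~\ref{theorem-2.7} as a direct consequence of the dichotomy in Theorem~\ref{theorem-2.4} together with the explicit description of $\tau^1_{\textsf{Ac}}$ furnished by Proposition~\ref{proposition-2.6}. First I would observe that the adjoined zero provides a singleton ideal $I=\{\boldsymbol{0}\}$, which is trivially compact, so Theorem~\ref{theorem-2.4} applies and yields exactly two alternatives: either $I=\{\boldsymbol{0}\}$ is an open subset of $S^{\boldsymbol{0}}_1$, or $S^{\boldsymbol{0}}_1$ is compact. In the first alternative the openness of $\{\boldsymbol{0}\}$ is precisely the assertion that $\boldsymbol{0}$ is an isolated point of $S^{\boldsymbol{0}}_1$, which is the first conclusion of the theorem, and nothing further is required.

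It then remains to treat the compact alternative and to show that it forces the topology to equal $\tau^1_{\textsf{Ac}}$ (unless $\boldsymbol{0}$ happens also to be isolated, in which case we are back in the first case). The key structural observation is that, since $S^{\boldsymbol{0}}_1$ is Hausdorff, the singleton $\{\boldsymbol{0}\}$ is closed, so $\boldsymbol{B}^1_{[0,\infty)}=S^{\boldsymbol{0}}_1\setminus\{\boldsymbol{0}\}$ is an open subspace; moreover, by the standing convention the topology it inherits is the usual topology $\tau_u$. If $\boldsymbol{0}$ is not isolated, then every neighbourhood of $\boldsymbol{0}$ meets $\boldsymbol{B}^1_{[0,\infty)}$, whence $\boldsymbol{B}^1_{[0,\infty)}$ is dense in $S^{\boldsymbol{0}}_1$. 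Thus $S^{\boldsymbol{0}}_1$ is a Hausdorff one-point compactification of the locally compact, non-compact Hausdorff space $\boldsymbol{B}^1_{[0,\infty)}$.

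At this stage I would invoke the uniqueness of the Alexandroff one-point compactification: for a locally compact Hausdorff space the topology of its one-point compactification is uniquely determined, the open neighbourhoods of the added point being exactly the sets $\{\boldsymbol{0}\}\cup(\boldsymbol{B}^1_{[0,\infty)}\setminus K)$ with $K$ a compact subset of $\boldsymbol{B}^1_{[0,\infty)}$. By Proposition~\ref{proposition-2.6}, $(S^{\boldsymbol{0}}_1,\tau^1_{\textsf{Ac}})$ is precisely this compactification: its basic neighbourhoods $U_n(\boldsymbol{0})=\{\boldsymbol{0}\}\cup\{(x,y)\colon x>n \text{ or } y>n\}$ are the complements of the compact squares $[0,n]\times[0,n]$, and since every compact subset of $\boldsymbol{B}^1_{[0,\infty)}$ is bounded and hence contained in such a square, these neighbourhoods are cofinal among the complements of compact sets. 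Consequently any compact Hausdorff topology on $S^{\boldsymbol{0}}_1$ extending $\tau_u$ with $\boldsymbol{0}$ non-isolated has the same neighbourhood filter at $\boldsymbol{0}$ as $\tau^1_{\textsf{Ac}}$ and agrees with it on $\boldsymbol{B}^1_{[0,\infty)}$, so the two topologies coincide.

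I expect the only genuine content to lie in the compact alternative, where the main (though standard) obstacle is the appeal to the uniqueness of the one-point compactification. Some care is needed to record that $\{\boldsymbol{0}\}$ is closed, so that $\boldsymbol{B}^1_{[0,\infty)}$ is truly open and dense, and to verify that the compact squares $[0,n]\times[0,n]$ form a cofinal family in the directed set of compact subsets of $\boldsymbol{B}^1_{[0,\infty)}$, which is exactly what guarantees that the sets $U_n(\boldsymbol{0})$ generate the full neighbourhood filter at the point at infinity.
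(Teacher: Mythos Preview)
Your proposal is correct and follows exactly the paper's approach: the paper states only that ``Theorem~\ref{theorem-2.4} and Proposition~\ref{proposition-2.6} imply the following theorem,'' and you have spelled out precisely this deduction, applying Theorem~\ref{theorem-2.4} to the singleton ideal $I=\{\boldsymbol{0}\}$ and then, in the compact case, invoking the uniqueness of the one-point Alexandroff compactification (which is explicitly identified with $(S^{\boldsymbol{0}}_1,\tau^1_{\textsf{Ac}})$ in the proof of Proposition~\ref{proposition-2.6}).
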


Since the bicyclic monoid does not embeds into any Hausdorff compact topological semigroup \cite{Anderson-Hunter-Koch=1965} and the semigroup contains many isomorphic copies of the bicyclic semigroup, Theorems \ref{theorem-2.4} and \ref{theorem-2.7} imply the following corollaries.

\begin{corollary}\label{corollary-2.8}
Let $S_1^I$ be a Hausdorff locally compact topological  semigroup $\boldsymbol{B}^1_{[0,\infty)}$ with an adjoined compact ideal $I$. Then $I$ is an open subset of $S_1^I$.
\end{corollary}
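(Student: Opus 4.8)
The plan is to derive the corollary from the dichotomy already established in Theorem~\ref{theorem-2.4} together with the classical fact that the bicyclic monoid does not embed into any compact topological semigroup. Since every topological semigroup is in particular a semitopological semigroup, Theorem~\ref{theorem-2.4} applies verbatim to $S_1^I$ and yields the alternative: either $I$ is open in $S_1^I$, or $S_1^I$ is compact. All that remains is to exclude the second possibility under the stronger (joint continuity) hypothesis of the corollary.

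First I would exhibit an algebraic copy of the bicyclic monoid inside $\boldsymbol{B}_{[0,\infty)}$. The subset $\{(m,n)\colon m,n\in\mathbb{N}\cup\{0\}\}$ is closed under the operation of $\boldsymbol{B}_{[0,\infty)}$, and under the identification $(m,n)\leftrightarrow q^m p^n$ the product formula $(a,b)(c,d)=(a+c-\min\{b,c\},\,b+d-\min\{b,c\})$ restricts exactly to the bicyclic multiplication $q^k p^l\cdot q^m p^n=q^{k+m-\min\{l,m\}}p^{l+n-\min\{l,m\}}$. Hence $\boldsymbol{B}_{[0,\infty)}$ contains a subsemigroup isomorphic to ${\mathscr{C}}(p,q)$ (indeed continuum many such copies, as remarked before the corollary), and so does $S_1^I$, since $\boldsymbol{B}_{[0,\infty)}\subseteq S_1^I$.

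Next I would argue by contradiction: suppose $S_1^I$ is compact. Then $S_1^I$ is a Hausdorff compact topological semigroup containing a subsemigroup isomorphic to the bicyclic monoid. This contradicts the result of Anderson, Hunter and Koch~\cite{Anderson-Hunter-Koch=1965} that the bicyclic monoid does not embed into any compact topological semigroup. Therefore $S_1^I$ cannot be compact, and by Theorem~\ref{theorem-2.4} the ideal $I$ must be an open subset of $S_1^I$.

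The only delicate point to keep track of is the strengthening of the hypothesis from \emph{semitopological} in Theorem~\ref{theorem-2.4} to \emph{topological} in the corollary: the Anderson--Hunter--Koch obstruction requires joint continuity of the multiplication (it rests on the stability of compact topological semigroups), so it is precisely this stronger hypothesis that lets us rule out the compact case. This is also why the analogous conclusion fails in the merely semitopological setting, where Example~\ref{example-2.5} and Proposition~\ref{proposition-2.6} show that a genuine compact alternative exists.
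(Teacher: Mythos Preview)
Your argument is correct and matches the paper's own reasoning: the corollary is deduced from the dichotomy of Theorem~\ref{theorem-2.4} together with the Anderson--Hunter--Koch result that the bicyclic monoid cannot be embedded into a Hausdorff compact topological semigroup, using that $\boldsymbol{B}_{[0,\infty)}$ contains isomorphic copies of ${\mathscr{C}}(p,q)$. Your additional remarks explicitly exhibiting the integer-coordinate copy of the bicyclic monoid and explaining why joint continuity is essential (via Example~\ref{example-2.5} and Proposition~\ref{proposition-2.6}) only make the argument more self-contained.
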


\begin{corollary}\label{corollary-2.9}
Let $S^{\boldsymbol{0}}_1$ be a Hausdorff locally compact topological  semigroup $\boldsymbol{B}^1_{[0,\infty)}$ with an adjoined zero $\boldsymbol{0}$. Then $\boldsymbol{0}$ is an isolated point of $S^{\boldsymbol{0}}_1$.
\end{corollary}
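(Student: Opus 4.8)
The plan is to derive Corollary~\ref{corollary-2.9} directly from the dichotomy already established in Theorem~\ref{theorem-2.7}, combined with the classical obstruction to embedding the bicyclic monoid into a compact topological semigroup. Recall that Theorem~\ref{theorem-2.7} asserts, for a Hausdorff locally compact \emph{semitopological} semigroup $S^{\boldsymbol{0}}_1$, that either $\boldsymbol{0}$ is isolated or the topology of $S^{\boldsymbol{0}}_1$ coincides with $\tau^1_{\textsf{Ac}}$. Since every topological semigroup is in particular a semitopological semigroup, Theorem~\ref{theorem-2.7} applies verbatim to the topological semigroup $S^{\boldsymbol{0}}_1$ of the present statement, so it suffices to exclude the second alternative.

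First I would argue by contradiction: suppose $\boldsymbol{0}$ is not isolated. Then by Theorem~\ref{theorem-2.7} the topology of $S^{\boldsymbol{0}}_1$ equals $\tau^1_{\textsf{Ac}}$, and by Proposition~\ref{proposition-2.6} the space $(S^{\boldsymbol{0}}_1,\tau^1_{\textsf{Ac}})$ is compact. Hence $S^{\boldsymbol{0}}_1$ would be a Hausdorff \emph{compact topological} semigroup.

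Next I would exhibit a copy of the bicyclic monoid inside $\boldsymbol{B}^1_{[0,\infty)}\subseteq S^{\boldsymbol{0}}_1$. The subset $\{(m,n)\colon m,n\in\mathbb{Z}_{\geqslant 0}\}$, with the restricted operation $(a,b)(c,d)=(a+c-\min\{b,c\},b+d-\min\{b,c\})$, is a subsemigroup of $\boldsymbol{B}_{[0,\infty)}$ isomorphic to $\mathscr{C}(p,q)$ via $q^kp^l\mapsto (k,l)$, since under this assignment the bicyclic product $q^kp^l\cdot q^mp^n=q^{k+m-\min\{l,m\}}p^{l+n-\min\{l,m\}}$ corresponds exactly to $(k,l)(m,n)=(k+m-\min\{l,m\},l+n-\min\{l,m\})$; this is precisely the observation recorded in the introduction that $\boldsymbol{B}_{[0,\infty)}$ contains isomorphic copies of the bicyclic semigroup. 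But a Hausdorff compact topological semigroup cannot contain the bicyclic monoid \cite{Anderson-Hunter-Koch=1965}, contradicting the conclusion of the previous paragraph. Therefore the second alternative of Theorem~\ref{theorem-2.7} is impossible and $\boldsymbol{0}$ must be isolated.

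The single point deserving care is the transition from \emph{separate} to \emph{joint} continuity. The compact topology $\tau^1_{\textsf{Ac}}$ of Proposition~\ref{proposition-2.6} is only shown to be separately continuous, so a compact semitopological semigroup may perfectly well contain the bicyclic monoid, as $\tau^1_{\textsf{Ac}}$ itself demonstrates. It is exactly the hypothesis that $S^{\boldsymbol{0}}_1$ is a \emph{topological} semigroup—that is, jointly continuous—that activates the Anderson--Hunter--Koch obstruction and forces the contradiction. I expect no further difficulty: the whole argument is a short assembly of the already-proved Theorem~\ref{theorem-2.7}, Proposition~\ref{proposition-2.6}, and the cited non-embedding theorem.
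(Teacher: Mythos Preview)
Your argument is correct and is essentially the same as the paper's: the paper derives Corollary~\ref{corollary-2.9} from Theorem~\ref{theorem-2.7} (and Theorem~\ref{theorem-2.4}) together with the Anderson--Hunter--Koch result that the bicyclic monoid does not embed into a Hausdorff compact topological semigroup, using that $\boldsymbol{B}_{[0,\infty)}$ contains copies of $\mathscr{C}(p,q)$. Your additional care in distinguishing separate from joint continuity is exactly the point being made.
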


\section{A locally compact semigroup $\boldsymbol{B}^2_{[0,\infty)}$ with an adjoined compact ideal}

Later in this section by $S_2^I$ we denote a Hausdorff locally compact semitopological  semigroup which is the semigroup $\boldsymbol{B}^2_{[0,\infty)}$ with an adjoined non-open  compact ideal $I$.

The proof of Lemma~\ref{lemma-3.1} is similar to Lemma~\ref{lemma-2.2}.

\begin{lemma}\label{lemma-3.1}
For any open neighbourhood $U(I)$ of the ideal $I$ in $S_1^I$ the set $U(I)\cap \boldsymbol{B}_{[0,\infty)}$ is unbounded.
\end{lemma}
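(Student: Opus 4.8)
The plan is to reproduce the proof of Lemma~\ref{lemma-2.2} almost verbatim, after making the key observation that nothing in that argument used any specific feature of the usual topology $\tau_u$ on $\boldsymbol{B}^1_{[0,\infty)}$. Indeed, Lemma~\ref{lemma-2.2} was deduced purely from Lemma~\ref{lemma-2.1} (which is stated for an arbitrary Hausdorff locally compact semitopological semigroup with a compact ideal, and hence applies to $S_2^I$ as well), from the hypothesis that $I$ is non-open, and from the explicit multiplication table of $\boldsymbol{B}_{[0,\infty)}$ --- all of which are unchanged when $\tau_u$ is replaced by the order topology $\tau_L$. (Here I read the ``$S_1^I$'' of the statement as a misprint for $S_2^I$.)

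Concretely, I would argue by contradiction. Assume $U(I)\cap\boldsymbol{B}_{[0,\infty)}$ is bounded, so that there is a real $m>0$ with $x<m$ or $y<m$ for every $(x,y)\in U(I)\cap\boldsymbol{B}_{[0,\infty)}$. Applying Lemma~\ref{lemma-2.1} to the point $(0,2m)$ I first obtain an open neighbourhood $V(I)\subseteq U(I)$ of $I$ with $V(I)\cdot(0,2m)\subseteq U(I)$. Since $I$ is non-open, each such neighbourhood strictly contains $I$ and hence meets $\boldsymbol{B}_{[0,\infty)}$, which keeps the relevant points available at every stage of the construction.

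Then I would compute the products from the semigroup law: $(2m,0)(c,d)=(2m+c,d)$ and $(2m+c,d)(0,2m)=(2m+c,d+2m)$ for $(c,d)\in\boldsymbol{B}_{[0,\infty)}$, so that a two-sided shift produces a point whose two coordinates both exceed $m$. The main obstacle is to guarantee that this point actually lies in $U(I)$: the single inclusion $V(I)\cdot(0,2m)\subseteq U(I)$ cannot be applied directly to $(2m+c,d)$, since that element need not belong to $V(I)$. I would remove this difficulty by nesting the neighbourhoods --- apply Lemma~\ref{lemma-2.1} once more, now to the point $(2m,0)$ and the neighbourhood $V(I)$, to get an open neighbourhood $W(I)$ of $I$ with $(2m,0)\cdot W(I)\subseteq V(I)$. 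Choosing $(c,d)\in W(I)\cap\boldsymbol{B}_{[0,\infty)}$ (possible because $I$ is non-open), I then have $(2m,0)(c,d)=(2m+c,d)\in V(I)$ and therefore $(2m+c,d+2m)=(2m+c,d)(0,2m)\in V(I)\cdot(0,2m)\subseteq U(I)$. This exhibits a point of $U(I)\cap\boldsymbol{B}_{[0,\infty)}$ with both coordinates greater than $m$, contradicting the boundedness assumption and proving that $U(I)\cap\boldsymbol{B}_{[0,\infty)}$ is unbounded.
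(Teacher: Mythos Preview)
Your proof is correct and follows exactly the approach the paper indicates: the paper gives no separate argument for Lemma~\ref{lemma-3.1} but simply states that its proof ``is similar to Lemma~\ref{lemma-2.2}'', relying on precisely the observation you spell out --- that nothing in that argument depends on the particular topology carried by $\boldsymbol{B}_{[0,\infty)}$. Your nested use of Lemma~\ref{lemma-2.1} (first passing to $W(I)$, then to $V(I)$) is slightly more careful than the one-step version in the paper's proof of Lemma~\ref{lemma-2.2}, and in fact makes the concluding step cleaner, since it places the two-sided shift $(2m{+}c,\,d{+}2m)$ directly in $U(I)$ rather than leaving that verification to the reader.
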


\begin{lemma}\label{lemma-3.2}
Let $U(I)$ be any open neighbourhood of the ideal $I$ in $S_2^I$ with the compact closure $\overline{U(I)}$. Then there exist finite subsets $B$ and $C$ of non-negative real numbers such that
\begin{equation*}
S_2^I\setminus U(I)\subseteq \bigsqcup_{\alpha\in B}L_{\alpha}^+\sqcup \bigsqcup_{\alpha\in C}L_\alpha^-.
\end{equation*}
\end{lemma}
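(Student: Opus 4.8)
The plan is to study the trace of $U(I)$ on each of the clopen rays $L_\alpha^{\pm}$ of the decomposition $\boldsymbol{B}^2_{[0,\infty)}=\bigoplus_{\alpha\geqslant0}L_\alpha^+\oplus\bigoplus_{\alpha>0}L_\alpha^-$ and to prove that all but finitely many of them are eventually contained in $U(I)$. I first record the compactness input. Since $I\subseteq U(I)$, the set $D:=\overline{U(I)}\setminus U(I)$ is closed in the compact space $\overline{U(I)}$ and disjoint from $I$, hence it is a compact subset of $\boldsymbol{B}_{[0,\infty)}$. As the rays are pairwise disjoint and clopen in $\boldsymbol{B}^2_{[0,\infty)}$, a compact subset of $\boldsymbol{B}^2_{[0,\infty)}$ can meet only finitely many of them; thus $D$ meets only finitely many rays, and on each ray $L$ the compact set $D\cap L$ lies below some finite parameter $R_L$. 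Above $R_L$ the ray carries no boundary point of $U(I)$, so $U(I)\cap\{p\in L:\text{ parameter }>R_L\}$ is relatively open and relatively closed in this connected tail; therefore each ray is either \emph{eventually inside} $U(I)$ (its tail lies in $U(I)$) or \emph{eventually outside} $U(I)$.

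The heart of the argument is to show that every ray is eventually inside. First I would produce at least one such ray. By the compactness of $\overline{U(I)}$ a ray is eventually inside exactly when its tail clusters at a point of $I$: an unbounded set of points of the ray sitting inside the compact set $\overline{U(I)}$ must have a cluster point, and this point cannot lie on any ray (that would bound the parameter), so it lies in $I$. Now by Lemma~\ref{lemma-3.1} the set $U(I)\cap\boldsymbol{B}_{[0,\infty)}$ is unbounded, so there are points $q_n\in U(I)$ whose parameter tends to infinity. If every ray were eventually outside, then the trace of $U(I)$ on the ray of $q_n$ would be a bounded, nonempty, proper open subset of that ray, whose supremum is a boundary point lying in $D$ at parameter at least that of $q_n$; letting $n\to\infty$ this puts points of $D$ at arbitrarily large parameter, contradicting that $D$ is compact and hence of bounded parameter on each of its finitely many rays. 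Hence some ray $L_\delta$ is eventually inside. I then propagate this to all rays by means of the semigroup structure. The assignment $(x,y)\mapsto y-x$ is a homomorphism onto $(\mathbb{R},+)$, and for $c\geqslant0$ left multiplication by $(0,c)$ raises $y-x$ by $c$ while right multiplication by $(c,0)$ lowers it by $c$, each carrying a tail of unbounded parameter onto another such tail. Since the tail of $L_\delta$ clusters at some $\iota\in I$ and $I$ is an ideal, separate continuity of the multiplication sends this cluster point to $(0,c)\iota$, respectively $\iota(c,0)$, which again lie in $I$; thus the shifted tail clusters in $I$, so $\delta+c$ and $\delta-c$ again index eventually inside rays. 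As $c$ runs over $[0,\infty)$ in both directions this reaches every value of $y-x$, so \emph{every} ray is eventually inside $U(I)$.

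Granting this, each $L\setminus U(I)$ is contained in $\{p\in L:\text{ parameter }\leqslant R_L\}$ and is nonempty only when $L$ meets $D$; as $D$ meets only finitely many rays, $S_2^I\setminus U(I)$ is contained in that finite family of rays, and sorting them by the sign of $y-x$ gives the desired finite sets $B$ and $C$. I expect the propagation step to be the main obstacle, for two reasons. The exclusion of infinitely many fully outside rays is not a topological phenomenon—a compact subset meeting $I$ may well meet infinitely many rays—so the semigroup structure must be used; the delicate point is that the translation invariance of the property ``eventually inside'' rests only on separate continuity together with the facts that $y-x$ is a homomorphism and that $I$ is an ideal, which is what keeps the translated cluster point inside $I$. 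The second delicate point is the counting step that extracts a first eventually inside ray from Lemma~\ref{lemma-3.1}, by balancing the unboundedness of $U(I)$ against the compactness of the boundary set $D$.
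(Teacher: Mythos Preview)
Your argument is correct. The paper's own proof is a two-liner: it merely observes that $\overline{U(I)}\setminus U(I)$, being a closed subset of the compact set $\overline{U(I)}$ that misses $I$, is a compact subset of the topological sum $\boldsymbol{B}^2_{[0,\infty)}=\bigoplus_\alpha L_\alpha^+\oplus\bigoplus_\alpha L_\alpha^-$ and hence meets only finitely many summands. Combined with connectedness of each ray this gives exactly your dichotomy ``entirely inside or entirely outside once past the boundary'', but it does \emph{not} by itself exclude infinitely many rays lying entirely outside $U(I)$; as a standalone proof of the stated inclusion $S_2^I\setminus U(I)\subseteq\bigsqcup_B L_\alpha^+\sqcup\bigsqcup_C L_\alpha^-$ it is therefore incomplete. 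Your steps~3 and~4---extracting one eventually-inside ray from Lemma~\ref{lemma-3.1} and the boundedness of the compact boundary, then using the homomorphism $(x,y)\mapsto y-x$ together with one-sided shifts by $(0,c)$ and $(c,0)$ to propagate to every ray---supply precisely the missing piece. That propagation mechanism is the same translation idea the paper employs in the very next lemma (Lemma~\ref{lemma-3.3}) to show every $L_\alpha^\pm\cup I$ is compact, and once Lemma~\ref{lemma-3.3} is available the ``entirely outside'' alternative is immediately ruled out. So in effect you have folded the shift argument of Lemma~\ref{lemma-3.3} into your proof of Lemma~\ref{lemma-3.2}, making it self-contained where the paper's brief proof tacitly leans on what comes after.
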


\begin{proof}
Since $\overline{U(I)}\setminus U(I)$ is compact subset in $S_2^I$, $\overline{U(I)}\setminus U(I)$ is compact subset in $\boldsymbol{B}^2_{[0,\infty)}$.  The equality $\boldsymbol{B}^2_{[0,\infty)}=\bigoplus_{\alpha\in[0,+\infty)}L_{\alpha}^+\oplus \bigoplus_{\alpha\in(0,+\infty)}L_\alpha^-$ implies the statement of the lemma.
\end{proof}

\begin{lemma}\label{lemma-3.3}
For any non-negative real number $\alpha$ the sets $L_{\alpha}^+\cup I$ and $L_{\alpha}^-\cup I$ are compact.
\end{lemma}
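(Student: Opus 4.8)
The plan is to reduce the statement to the claim that each line $L_\alpha^+$ (and each $L_\alpha^-$) is contained in some compact subset of $S_2^I$. Since $\boldsymbol{B}^2_{[0,\infty)}=S_2^I\setminus I$ is open in $S_2^I$ ($I$ being compact, hence closed) and $L_\alpha^+$ is closed in $\boldsymbol{B}^2_{[0,\infty)}$, every limit point of $L_\alpha^+$ in $S_2^I$ lying outside $L_\alpha^+$ must belong to $I$; thus $\overline{L_\alpha^+}\subseteq L_\alpha^+\cup I$, whence $L_\alpha^+\cup I=\overline{L_\alpha^+}\cup I$. Consequently, once I know $\overline{L_\alpha^+}$ is compact, $L_\alpha^+\cup I$ is a union of two compact sets and the lemma follows (and likewise for $L_\alpha^-$). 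To obtain compactness of $\overline{L_\alpha^+}$ it suffices to exhibit one compact set $K\subseteq S_2^I$ with $L_\alpha^+\subseteq K$, for then $\overline{L_\alpha^+}\subseteq K$ is a closed subset of a compact set.

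To produce such a $K$ I would first fix, using local compactness, an open neighbourhood $U(I)$ of $I$ with compact closure $\overline{U(I)}$. Lemma~\ref{lemma-3.2} then yields finite sets $B$ and $C$ of non-negative reals with $S_2^I\setminus U(I)\subseteq\bigsqcup_{\beta\in B}L_\beta^+\sqcup\bigsqcup_{\beta\in C}L_\beta^-$; in particular every line $L_\beta^+$ with $\beta\notin B$ lies entirely in $U(I)$, and similarly for $L_\beta^-$ with $\beta\notin C$. The key idea is that, although the particular line $L_\alpha^+$ we care about may be one of the finitely many exceptional lines sitting outside $U(I)$, the semigroup structure lets us realise a cofinite part of $L_\alpha^+$ as the continuous image of a \emph{non-exceptional} line under a fixed translation.

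Concretely, I would choose $\gamma>0$ with $\alpha+\gamma\notin B$ (possible as $B$ is finite) and set $\beta=\alpha+\gamma$, so that $L_\beta^+\subseteq U(I)\subseteq\overline{U(I)}$. A direct computation with the product on $\boldsymbol{B}_{[0,\infty)}$ gives $(x,x+\beta)\cdot(\gamma,0)=(x,x+\alpha)$ whenever $x>0$ (here $b=x+\beta>\gamma=c$, so the third case of the product applies, yielding $(x,x+\beta-\gamma)$). Since right translation by the fixed element $(\gamma,0)$ is continuous ($S_2^I$ is semitopological), the set $\overline{U(I)}\cdot(\gamma,0)$ is compact and contains $\{(x,x+\alpha)\colon x>0\}$; hence $L_\alpha^+\subseteq(\overline{U(I)}\cdot(\gamma,0))\cup\{(0,\alpha)\}$, a compact set, as required. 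The case of $L_\alpha^-$ is entirely analogous: choosing $\gamma>0$ with $\alpha+\gamma\notin C$ and $\beta=\alpha+\gamma$, one computes $(x+\beta,x)\cdot(0,\gamma)=(x+\beta,x+\gamma)\in L_\alpha^-$ for $x>0$, which covers the tail $\{(t+\alpha,t)\colon t>\gamma\}$, while the complementary initial segment $\{(t+\alpha,t)\colon 0\le t\le\gamma\}$ is compact on its own.

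I expect the main obstacle to be the idea of the second paragraph rather than any computation: one must resist trying to cover $L_\alpha^+$ directly by a neighbourhood of $I$ (which can fail precisely for the exceptional lines produced by Lemma~\ref{lemma-3.2}) and instead use a fixed translation to pull $L_\alpha^+$ into the continuous image of a line that \emph{does} lie inside such a neighbourhood. The only delicate bookkeeping is checking which branch of the piecewise product formula is in force, so that the translate genuinely lands in $L_\alpha^+$ (respectively $L_\alpha^-$) rather than in some other line; this becomes routine once $\gamma$ and $\beta=\alpha+\gamma$ are fixed.
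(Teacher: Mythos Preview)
Your argument is correct and follows essentially the same strategy as the paper: use Lemma~\ref{lemma-3.2} to locate a line $L_\beta^+$ (resp.\ $L_\beta^-$) lying entirely inside an open neighbourhood $U(I)$ of $I$ with compact closure, and then push it onto $L_\alpha^+$ (resp.\ $L_\alpha^-$) by a fixed continuous translation. The packaging differs only slightly. The paper first proves that $L_{\alpha_0}^+\cup I$ and $L_{\alpha_0}^-\cup I$ are compact for one suitable $\alpha_0$ (as closed subsets of $\overline{U(I)}$, using that every line is clopen in $\boldsymbol{B}^2_{[0,\infty)}$), and then observes that the right translation $\rho_{(\alpha_0,\alpha)}$ carries $L_{\alpha_0}^+$ \emph{bijectively} onto $L_\alpha^+$, while the left translation $\lambda_{(\alpha,\alpha_0)}$ carries $L_{\alpha_0}^-$ bijectively onto $L_\alpha^-$; hence $L_\alpha^\pm\cup I$ arises directly as a continuous image of a compact set union $I$. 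Your right translation by $(0,\gamma)$ in the $L_\alpha^-$ case only hits the tail $\{(t+\alpha,t)\colon t\geqslant\gamma\}$, which is why you had to append a compact initial segment by hand; switching to left translation by $(\alpha,\beta)$, as the paper does, gives the exact image in one stroke. Either way the idea is the same.
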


\begin{proof}
First we show that there exists a non-negative real number $\alpha_0$ such that the sets $L_{\alpha_0}^+\cup I$ and $L_{\alpha_0}^-\cup I$ are compact. We fix an arbitrary open neighbourhood $U(I)$ of the ideal $I$ in $S_2^I$. By Lemma~\ref{lemma-3.2} $L_{\alpha}^+\cup L_{\alpha}^-\subseteq U(I)$ for almost all not finitely many $\alpha\in[0,+\infty)$. Without loss of generality we may assume that the closure $\overline{U(I)}$ of $U(I)$ is a compact subset of $S_2^I$. Fix $\alpha_0\in[0,+\infty)$ such that $L_{\alpha_0}^+\cup L_{\alpha_0}^-\subseteq U(I)$. Since $L_{\alpha_0}^+$ and $L_{\alpha_0}^-$are open subsets of $S_2^I$, we get that
\begin{equation*}
L_{\alpha}^+\cup I=S_2^I\setminus \left(\bigcup_{\alpha_0\neq\alpha\geqslant 0}L_{\alpha}^+ \cup \bigcup_{\alpha>0}L_{\alpha}^-\right) \qquad \mbox{and} \qquad
L_{\alpha}^-\cup I=S_2^I\setminus \left(\bigcup_{\alpha>0}L_{\alpha}^+ \cup \bigcup_{\alpha_0\neq\alpha\geqslant 0}L_{\alpha}^-\right)
\end{equation*}
are closed subsets of $\overline{U(I)}$, and hence they are compact.

We observe that
\begin{equation*}
  (x,x+\alpha_0)\cdot (\alpha_0,\alpha)=(x,x+\alpha) \qquad \mbox{and} \qquad (\alpha,\alpha_0)\cdot(x+\alpha_0,x)=(x+\alpha,x)
\end{equation*}
in $\boldsymbol{B}_{[0,\infty)}$ for any non-negative real numbers $\alpha$, $\alpha_0$ and $x$. This implies that $\rho_{(\alpha_0,\alpha)}(L_{\alpha_0}^+)=L_{\alpha}^+$ and $\lambda_{(\alpha,\alpha_0)}(L_{\alpha_0}^-)=L_{\alpha}^-$, where $\rho_{(\alpha_0,\alpha)}\colon S_2^I\to S_2^I$ and $\lambda_{(\alpha,\alpha_0)}\colon S_2^I\to S_2^I$ are right and left shifts on elements $(\alpha_0,\alpha)$ and $(\alpha,\alpha_0)$, respectively. Since $S_2^I$ is a semitopological semigroup, the sets $\rho_{(\alpha_0,\alpha)}(L_{\alpha_0}^+\cup I)\cup I=L_{\alpha}^+\cup I$ and  $\lambda_{(\alpha,\alpha_0)}(L_{\alpha_0}^-\cup I)\cup I=L_{\alpha}^-\cup I$ are compact.
\end{proof}

\begin{lemma}\label{lemma-3.4}
Let $U(I)$ be any open neighbourhood of the ideal $I$ in $S_2^I$ with the compact closure $\overline{U(I)}$. Then for any non-negative real number $\alpha$ the sets $L_{\alpha}^+\setminus U(I)$ and $L_{\alpha}^-\setminus U(I)$ are compact.
\end{lemma}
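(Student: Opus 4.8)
The plan is to deduce Lemma~\ref{lemma-3.4} from Lemma~\ref{lemma-3.3} together with the local compactness and Hausdorff structure of $S_2^I$. The key observation is that $L_\alpha^+$ and $L_\alpha^-$ are homeomorphic to $[0,+\infty)$ with the usual topology and are open-and-closed in $\boldsymbol{B}^2_{[0,\infty)}$, so the relevant compactness is really a statement about closed bounded subsets of a ray. First I would fix a non-negative real $\alpha$ and consider the set $K^+=L_\alpha^+\cup I$. By Lemma~\ref{lemma-3.3} this set is compact. Since $U(I)$ is an open neighbourhood of $I$ in $S_2^I$, the set $L_\alpha^+\setminus U(I)=K^+\setminus U(I)$ is a closed subset of the compact set $K^+$ (indeed $U(I)$ is open, so its complement is closed, and intersecting a closed set with a compact set yields a compact set). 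The same argument applied to $K^-=L_\alpha^-\cup I$ handles $L_\alpha^-\setminus U(I)$.

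More carefully, the point is that $L_\alpha^+\setminus U(I)$ does not meet $I$, because $I\subseteq U(I)$, so $L_\alpha^+\setminus U(I)=(L_\alpha^+\cup I)\setminus U(I)$. Thus writing
\begin{equation*}
  L_\alpha^+\setminus U(I)=(L_\alpha^+\cup I)\cap(S_2^I\setminus U(I)),
\end{equation*}
I express the set as the intersection of the compact set $L_\alpha^+\cup I$ (compact by Lemma~\ref{lemma-3.3}) with the closed set $S_2^I\setminus U(I)$. A closed subset of a compact Hausdorff space is compact, so $L_\alpha^+\setminus U(I)$ is compact. The identical display with $L_\alpha^-$ in place of $L_\alpha^+$ gives compactness of $L_\alpha^-\setminus U(I)$, completing the proof.

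I do not expect a genuine obstacle here, since the work has already been front-loaded into Lemma~\ref{lemma-3.3}; the present lemma is essentially the remark that removing an open neighbourhood of $I$ from a set whose union with $I$ is compact leaves a compact remainder. The only point that requires a moment's care is confirming that $U(I)$ may be taken with compact closure without loss of generality, which is exactly what the hypothesis of the lemma grants, and that the Hausdorff assumption on $S_2^I$ ensures compact sets are closed so that the intersection manipulation is valid. Everything else is the standard fact that closed subsets of compact sets are compact.
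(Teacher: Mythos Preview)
Your proof is correct and follows essentially the same line as the paper's: invoke Lemma~\ref{lemma-3.3} to obtain compactness of $L_{\alpha}^{\pm}\cup I$, then observe that $L_{\alpha}^{\pm}\setminus U(I)=(L_{\alpha}^{\pm}\cup I)\cap(S_2^I\setminus U(I))$ is closed in this compact set and hence compact. Your version is more explicit (noting $I\subseteq U(I)$ to justify the set identity), but the argument is the same; the aside about Hausdorffness is harmless but unnecessary, since closed subsets of compact spaces are compact in any topological space.
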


\begin{proof}
By Lemma~\ref{lemma-3.3} for any non-negative real number $\alpha$ the sets $L_{\alpha}^+\cup I$ and $L_{\alpha}^-\cup I$ are compact. Since $L_{\alpha}^+\setminus U(I)$ and $L_{\alpha}^-\setminus U(I)$ are closed subsets of $L_{\alpha}^+\cup I$ and $L_{\alpha}^-\cup I$, they are compact.
\end{proof}

Lemmas~\ref{lemma-3.1}, \ref{lemma-3.2}, \ref{lemma-3.3}, and \ref{lemma-3.4} imply the following theorem.

\begin{theorem}\label{theorem-3.5}
Let $S_2^I$ be a Hausdorff locally compact semitopological  semigroup $\boldsymbol{B}^2_{[0,\infty)}$ with an adjoined compact ideal $I$. Then either $I$ is an open subset of $S_2^I$ or the semigroup $S_2^I$ is compact.
\end{theorem}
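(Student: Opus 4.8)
The plan is to prove the stated dichotomy by showing that whenever the ideal $I$ fails to be open in $S_2^I$, the whole semigroup $S_2^I$ must be compact; together with the trivial case in which $I$ is already open, this gives the theorem. So I work under the standing assumption of the section that $I$ is a non-open compact ideal and aim for compactness of $S_2^I$.

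The first step is to manufacture a good neighbourhood from local compactness alone. Since $I$ is compact and $S_2^I$ is locally compact Hausdorff, each point of $I$ has an open neighbourhood with compact closure; covering $I$ by finitely many of them and taking the union produces an open neighbourhood $U(I)$ of $I$ whose closure $\overline{U(I)}$ is compact, being a finite union of compact sets. The key reduction is then the elementary identity
\begin{equation*}
S_2^I = \overline{U(I)} \cup \left(S_2^I \setminus U(I)\right),
\end{equation*}
which follows from $U(I)\subseteq\overline{U(I)}$. Hence it suffices to prove that the closed set $S_2^I \setminus U(I)$ is compact, for then $S_2^I$ is a union of two compact sets.

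To handle $S_2^I \setminus U(I)$, I would lean on the direct-sum structure $\boldsymbol{B}^2_{[0,\infty)} = \bigoplus_{\alpha\geqslant 0} L_\alpha^+ \oplus \bigoplus_{\alpha>0} L_\alpha^-$ into open-and-closed copies of $[0,+\infty)$. Because $I \subseteq U(I)$, the complement $S_2^I \setminus U(I)$ contains no point of $I$ and so lies entirely in $\boldsymbol{B}_{[0,\infty)}$; consequently it decomposes as the disjoint union of the traces $L_\alpha^+ \setminus U(I)$ and $L_\alpha^- \setminus U(I)$. Lemma~\ref{lemma-3.2} now restricts the nonempty traces to the finite index sets $B$ and $C$, giving
\begin{equation*}
S_2^I \setminus U(I) = \bigsqcup_{\alpha \in B}\left(L_\alpha^+ \setminus U(I)\right) \sqcup \bigsqcup_{\alpha \in C}\left(L_\alpha^- \setminus U(I)\right),
\end{equation*}
and Lemma~\ref{lemma-3.4} certifies that each of these finitely many traces is compact. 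A finite union of compact sets being compact, $S_2^I \setminus U(I)$ is compact, which closes the argument.

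The conceptual work is already discharged by the lemmas: Lemma~\ref{lemma-3.2} confines the complement of $U(I)$ to finitely many lines, while Lemmas~\ref{lemma-3.3}--\ref{lemma-3.4} supply compactness of each line's contribution. The only remaining obstacle is organisational, namely to verify carefully that the complement really splits along the direct-sum structure (this uses both $I\subseteq U(I)$ and the fact that every $L_\alpha^+$, $L_\alpha^-$ is open-and-closed) and that the reduction to the single set $S_2^I\setminus U(I)$ is legitimate. Notably, no further separate-continuity input is needed at this stage, since all of it has been absorbed into the preceding lemmas.
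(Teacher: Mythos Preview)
Your proposal is correct and follows essentially the same route as the paper, which simply states that Lemmas~\ref{lemma-3.1}--\ref{lemma-3.4} imply the theorem; you have just made explicit the assembly step (produce $U(I)$ with compact closure, write $S_2^I=\overline{U(I)}\cup(S_2^I\setminus U(I))$, and use Lemmas~\ref{lemma-3.2} and \ref{lemma-3.4} to see the second piece is a finite union of compact sets).
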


Next we need some notions for the further construction. For the natural partial order $\preccurlyeq$ on the semigroup $\boldsymbol{B}_{[0,\infty)}$ and any $(a,b)\in\boldsymbol{B}_{[0,\infty)}$ we denote
\begin{align*}
  {\uparrow_{\preccurlyeq}}(a,b)         &=\left\{(x,y)\in\boldsymbol{B}_{[0,\infty)}\colon (a,b)\preccurlyeq(x,y)\right\}; \\
  {\downarrow_{\preccurlyeq}}(a,b)       &=\left\{(x,y)\in\boldsymbol{B}_{[0,\infty)}\colon (x,y)\preccurlyeq(a,b)\right\}; \\
  {\downarrow_{\preccurlyeq}^\circ}(a,b) &={\downarrow_{\preccurlyeq}}(a,b)\setminus\left\{(a,b)\right\}.
\end{align*}

The following statement describes the natural partial order $\preccurlyeq$ on the semigroup $\boldsymbol{B}_{[0,\infty)}$ and it follows from Lemma~1 of \cite{Gutik-Maksymyk=2016}.

\begin{lemma}\label{lemma-3.6}
Let $(a,b)$ and $(c,d)$ be arbitrary elements of the semigroup $\boldsymbol{B}_{[0,\infty)}$. Then the following statements are equivalent:
\begin{enumerate}
  \item[$(i)$] $(a,b)\preccurlyeq(c,d)$;
  \item[$(ii)$] $a\geqslant c$ and $a-b=c-d$;
  \item[$(iii)$] $b\geqslant d$ and $a-b=c-d$.
\end{enumerate}
\end{lemma}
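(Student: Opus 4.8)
The plan is to reduce the whole statement to a single direct computation via the characterization of the natural partial order recorded in Remark~\ref{remark-1.1}. First I would determine the idempotents of $\boldsymbol{B}_{[0,\infty)}$: solving $(a,b)(a,b)=(a,b)$ with the given multiplication forces $\min\{a,b\}=a$ and $\min\{a,b\}=b$, hence $a=b$, so $E(\boldsymbol{B}_{[0,\infty)})=\{(a,a)\colon a\geqslant 0\}=L_0^+$. Next I would identify the inverse of a generic element; a short check shows $(a,b)(b,a)(a,b)=(a,b)$ and $(b,a)(a,b)(b,a)=(b,a)$, so $(a,b)^{-1}=(b,a)$, and in particular $(a,b)(a,b)^{-1}=(a,b)(b,a)=(a,a)$.

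With these facts in hand, condition $(\beta)$ of Remark~\ref{remark-1.1} says that $(a,b)\preccurlyeq(c,d)$ if and only if $(a,b)=(a,b)(a,b)^{-1}(c,d)=(a,a)(c,d)$. The heart of the argument is then to evaluate $(a,a)(c,d)=(a+c-\min\{a,c\},\,a+d-\min\{a,c\})$ and impose the equality with $(a,b)$. I would split according to the sign of $a-c$: when $a<c$ the product equals $(c,d)$, forcing $c=a$, a contradiction; when $a=c$ it equals $(a,d)$, forcing $b=d$; and when $a>c$ it equals $(a,a+d-c)$, forcing $b=a+d-c$, i.e.\ $a-b=c-d$. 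Collecting the surviving cases yields exactly condition $(ii)$, namely $a\geqslant c$ together with $a-b=c-d$, which establishes $(i)\Leftrightarrow(ii)$.

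Finally, the equivalence $(ii)\Leftrightarrow(iii)$ is purely arithmetic: the common constraint $a-b=c-d$ rewrites as $a-c=b-d$, so $a\geqslant c$ holds precisely when $b\geqslant d$, and the equivalence follows at once. I do not anticipate a genuine obstacle; the only point demanding care is the case analysis on $\min\{a,c\}$ in the displayed product, where one must track which branch is compatible with the equation $(a,a)(c,d)=(a,b)$ (in particular noticing that the branch $a<c$ is vacuous). Since the statement is also recorded as Lemma~1 of \cite{Gutik-Maksymyk=2016}, one could alternatively just invoke that reference, but the self-contained computation sketched above is short enough to present directly.
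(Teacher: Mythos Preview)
Your argument is correct. The identification of idempotents and inverses is right, the application of condition $(\beta)$ from Remark~\ref{remark-1.1} is legitimate, and the case analysis on $\min\{a,c\}$ in the product $(a,a)(c,d)$ cleanly yields $(ii)$; the equivalence $(ii)\Leftrightarrow(iii)$ is indeed immediate arithmetic.

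As for comparison: the paper does not actually give a proof of this lemma. It simply records the statement as a consequence of Lemma~1 of \cite{Gutik-Maksymyk=2016} and moves on. You already noticed this and mentioned the citation as an alternative. So your write-up is strictly more informative than what appears in the paper: you supply the short self-contained computation that the authors chose to outsource to the reference. Either option is fine for the final text; your direct argument has the advantage of keeping the paper self-contained at essentially no cost in length.
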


Lemma~\ref{lemma-3.6} implies that for any non-negative real number $\alpha$ the set $L_{\alpha}^+$ coincides with all elements of $\boldsymbol{B}_{[0,\infty)}$ which are comparable with $(0,\alpha)$, and the set $L_{\alpha}^-$ coincides with all elements of $\boldsymbol{B}_{[0,\infty)}$ which are comparable with $(\alpha,0)$ with the respact to the natural partial order $\preccurlyeq$ on the semigroup $\boldsymbol{B}_{[0,\infty)}$. Hence we have that $L_{\alpha}^+={\downarrow_{\preccurlyeq}}(0,\alpha)$ and $L_{\alpha}^-={\downarrow_{\preccurlyeq}}(\alpha,0)$.

Simple calculations and routine verifications show the following proposition.

\begin{proposition}\label{proposition-3.7}
Let $\alpha$ and $\beta$ be non-negative real numbers. Then the following statements hold:
\begin{enumerate}
  \item[$(i)$]   $L_{\alpha}^+\cdot L_{\beta}^+=L_{\alpha+\beta}^+$;
  \item[$(ii)$]  $L_{\alpha}^-\cdot L_{\beta}^-=L_{\alpha+\beta}^-$;
  \item[$(iii)$] $L_{\alpha}^+\cdot L_{\beta}^-=
     \left\{
       \begin{array}{ll}
         L_{\alpha-\beta}^+, & \hbox{if~} \alpha\geqslant\beta; \\
         L_{\beta-\alpha}^-, & \hbox{if~} \alpha\leqslant\beta;
       \end{array}
     \right.
     $
  \item[$(iv)$] $L_{\beta}^-\cdot L_{\alpha}^+={\downarrow_{\preccurlyeq}}(\beta,\alpha)\subseteq
    \left\{
       \begin{array}{ll}
         L_{\alpha-\beta}^+, & \hbox{if~} \alpha\geqslant\beta; \\
         L_{\beta-\alpha}^-, & \hbox{if~} \alpha\leqslant\beta.
       \end{array}
     \right.
   $
\end{enumerate}
\end{proposition}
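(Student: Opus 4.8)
The plan is to organize all four identities around a single observation: the map $\delta\colon\boldsymbol{B}_{[0,\infty)}\to(\mathbb{R},+)$ defined by $\delta(a,b)=b-a$ is a semigroup homomorphism. Indeed, since the semigroup operation subtracts the same quantity $\min\{b,c\}$ from both coordinates of $(a,b)(c,d)$, one computes $\delta((a,b)(c,d))=(b+d-\min\{b,c\})-(a+c-\min\{b,c\})=(b-a)+(d-c)=\delta(a,b)+\delta(c,d)$, independently of which case in the definition of the product occurs. With this map in hand I would note that $L_\alpha^+=\delta^{-1}(\alpha)$ and $L_\alpha^-=\delta^{-1}(-\alpha)$ for every $\alpha\geqslant 0$. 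The homomorphism property then immediately yields both the ``target set'' and the inclusion direction of all four statements: a product of an element of $\delta^{-1}(s)$ and an element of $\delta^{-1}(t)$ lies in $\delta^{-1}(s+t)$, and $\delta^{-1}(r)$ equals $L_r^+$ when $r\geqslant 0$ and $L_{-r}^-$ when $r\leqslant 0$. This disposes of the inclusions ``$\subseteq$'' in $(i)$--$(iv)$ and pins down which of $L^+$ or $L^-$ occurs, splitting into the cases $\alpha\geqslant\beta$ and $\alpha\leqslant\beta$ exactly as stated.

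For the reverse inclusions in $(i)$, $(ii)$, $(iii)$ I would exhibit explicit factorizations using the ``$b=c$'' branch of the product, where $(a,b)(c,d)=(a,d)$. For $(i)$, given a target $(t,t+\alpha+\beta)\in L_{\alpha+\beta}^+$, writing it as $(t,t+\alpha)\cdot(t+\alpha,t+\alpha+\beta)$ realizes it as a product from $L_\alpha^+\cdot L_\beta^+$; statement $(ii)$ is symmetric. For $(iii)$ with $\alpha\geqslant\beta$ and target $(t,t+\alpha-\beta)$, the factorization $(t,t+\alpha)\cdot(t+\alpha,t+\alpha-\beta)$ lies in $L_\alpha^+\cdot L_\beta^-$ and equals the target via the $b=c$ branch (the needed nonnegativity $t+\alpha-\beta\geqslant 0$ uses $\alpha\geqslant\beta$); the case $\alpha\leqslant\beta$ is handled by the mirror factorization. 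Together with the inclusions from the first step this gives the equalities in $(i)$--$(iii)$.

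Statement $(iv)$ is the one I expect to require genuine case-work rather than the homomorphism shortcut, and it is the main point of the proposition. Here I would compute $(u+\beta,u)\cdot(v,v+\alpha)$ directly, splitting on $\min\{u,v\}$; in all three branches ($u<v$, $u=v$, $u>v$) the product simplifies to a point of the form $(w+\beta,w+\alpha)$ with $w\geqslant 0$, so that $L_\beta^-\cdot L_\alpha^+=\{(w+\beta,w+\alpha)\colon w\geqslant 0\}$. By Lemma~\ref{lemma-3.6}$(ii)$ a pair $(x,y)$ satisfies $(x,y)\preccurlyeq(\beta,\alpha)$ exactly when $x\geqslant\beta$ and $x-y=\beta-\alpha$, that is, precisely when $(x,y)=(w+\beta,w+\alpha)$ for some $w\geqslant 0$; hence this set equals ${\downarrow_{\preccurlyeq}}(\beta,\alpha)$, giving the equality in $(iv)$. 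Finally, since every such point has $\delta(w+\beta,w+\alpha)=\alpha-\beta$, the first step places it in $L_{\alpha-\beta}^+$ when $\alpha\geqslant\beta$ and in $L_{\beta-\alpha}^-$ when $\alpha\leqslant\beta$, which is the asserted inclusion.

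The only real subtlety, and the reason $(iv)$ is stated as an inclusion while $(i)$--$(iii)$ are equalities, is that the factor $(u+\beta,u)$ forces the first coordinate $w+\beta$ of the product to be at least $\beta$; thus $L_\beta^-\cdot L_\alpha^+$ is in general a proper subset of $L_{\alpha-\beta}^+$ (respectively $L_{\beta-\alpha}^-$), namely the sub-down-set emanating from $(\beta,\alpha)$. I would flag this explicitly in the write-up so that the asymmetry between $(iii)$ and $(iv)$ is transparent.
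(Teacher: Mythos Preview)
Your argument is correct. The paper itself offers no proof beyond the sentence ``Simple calculations and routine verifications show the following proposition,'' so there is no detailed argument to compare against; you have supplied exactly the verification the authors leave to the reader.

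Your use of the homomorphism $\delta(a,b)=b-a$ onto $(\mathbb{R},+)$ is a genuinely cleaner organizing device than a bare case analysis: it makes the inclusions $L_\alpha^\pm\cdot L_\beta^\pm\subseteq L_\gamma^\pm$ automatic and explains uniformly why the target in $(iii)$ and $(iv)$ switches between $L^+$ and $L^-$ at $\alpha=\beta$. The explicit factorizations via the $b=c$ branch for the reverse inclusions in $(i)$--$(iii)$ are exactly right, and your computation in $(iv)$ that $(u+\beta,u)(v,v+\alpha)=(\max\{u,v\}+\beta,\max\{u,v\}+\alpha)$ identifies the product set as $\{(w+\beta,w+\alpha)\colon w\geqslant 0\}$, which is precisely ${\downarrow_{\preccurlyeq}}(\beta,\alpha)$ by Lemma~\ref{lemma-3.6}. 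Your closing remark explaining why $(iv)$ is only an inclusion---the constraint $w\geqslant 0$ forces the first coordinate to be at least $\beta$---is a helpful clarification that the paper does not make explicit.
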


\begin{lemma}\label{lemma-3.8}
For arbitrary $(a_0,b_0),(a_1,b_1)\in \boldsymbol{B}_{[0,\infty)}$ there exists $(c,d)\in \boldsymbol{B}_{[0,\infty)}$ such that $(a_0,b_0)\cdot(c,d)\preccurlyeq(a_1,b_1)$ $\left[(c,d)\cdot(a_0,b_0)\preccurlyeq(a_1,b_1)\right]$. Moreover, $(a_0,b_0)\cdot(x,y)\preccurlyeq(a_1,b_1)$ $\left[(x,y)\cdot(a_0,b_0)\preccurlyeq(a_1,b_1)\right]$ for any $(x,y)\preccurlyeq(c,d)$ in $\boldsymbol{B}_{[0,\infty)}$.
\end{lemma}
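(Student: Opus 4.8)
The plan is to separate the statement into two nearly independent pieces: first exhibit a single witness $(c,d)$ with $(a_0,b_0)\cdot(c,d)\preccurlyeq(a_1,b_1)$, and then bootstrap the ``moreover'' clause from the fact that in the inverse semigroup $\boldsymbol{B}_{[0,\infty)}$ left (and right) translations are monotone with respect to the natural partial order. This second reduction is the conceptually clean half and follows directly from Remark~\ref{remark-1.1}: if $(x,y)\preccurlyeq(c,d)$, then characterization $(\gamma)$ gives $(x,y)=(c,d)\cdot e$ with the idempotent $e=(x,y)^{-1}(x,y)\in E(\boldsymbol{B}_{[0,\infty)})$, whence $(a_0,b_0)\cdot(x,y)=\big[(a_0,b_0)\cdot(c,d)\big]\cdot e$, which is $\preccurlyeq(a_0,b_0)\cdot(c,d)$ by the very definition of $\preccurlyeq$. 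Transitivity of $\preccurlyeq$ together with the first piece then yields $(a_0,b_0)\cdot(x,y)\preccurlyeq(a_1,b_1)$ for every $(x,y)\preccurlyeq(c,d)$, which is exactly the ``moreover'' assertion.

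Thus everything reduces to producing one suitable $(c,d)$, and here I would simply compute from the operation on $\boldsymbol{B}_{[0,\infty)}$. Choosing $c>b_0$ places us in the first branch of the product, so $(a_0,b_0)\cdot(c,d)=(a_0-b_0+c,\,d)$. By Lemma~\ref{lemma-3.6}$(ii)$ this product is $\preccurlyeq(a_1,b_1)$ exactly when it lies on the diagonal fixed by $(a_1,b_1)$ and its first coordinate is at least $a_1$, i.e.\ when $d=c+(a_0-b_0)-(a_1-b_1)$ and $a_0-b_0+c\geqslant a_1$. I would therefore pick $c$ large enough to satisfy simultaneously $c>b_0$, $c\geqslant a_1-a_0+b_0$, and $c\geqslant(a_1-b_1)-(a_0-b_0)$ (this last bound guaranteeing $d\geqslant0$), and then set $d=c+(a_0-b_0)-(a_1-b_1)$. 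A one-line verification against Lemma~\ref{lemma-3.6}$(ii)$ confirms $(a_0,b_0)\cdot(c,d)\preccurlyeq(a_1,b_1)$ with $(c,d)\in\boldsymbol{B}_{[0,\infty)}$.

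The bracketed (left-multiplication) statements follow by the mirror-image argument: one takes $d>a_0$, so that $(c,d)\cdot(a_0,b_0)=(c,\,d-a_0+b_0)$, chooses $d$ large and $c$ on the correct diagonal, and for the monotonicity step uses characterization $(\beta)$ of Remark~\ref{remark-1.1}, writing $(x,y)=f\cdot(c,d)$ with $f=(x,y)(x,y)^{-1}\in E(\boldsymbol{B}_{[0,\infty)})$ to obtain $(x,y)\cdot(a_0,b_0)=f\cdot\big[(c,d)\cdot(a_0,b_0)\big]\preccurlyeq(c,d)\cdot(a_0,b_0)$ (the $\preccurlyeq$-monotonicity of right translations again being a consequence of the description of $\preccurlyeq$ in Remark~\ref{remark-1.1}). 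None of these steps is deep; the only point requiring genuine care is the combined lower bound on $c$ (respectively $d$), since the witness must land on the diagonal $a-b=a_1-b_1$ determined by $(a_1,b_1)$, remain inside $\boldsymbol{B}_{[0,\infty)}$, and sit \emph{below} $(a_1,b_1)$ rather than above it. Accordingly, the size condition $a_0-b_0+c\geqslant a_1$ is precisely the constraint one must not overlook.
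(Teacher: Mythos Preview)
Your proof is correct and follows essentially the same route as the paper: choose $c$ large (the paper takes $c\geqslant a_1+a_0+b_0$, which dominates your three lower bounds), set $d=c+(a_0-b_0)-(a_1-b_1)$, and verify the product lands on the correct diagonal below $(a_1,b_1)$ via Lemma~\ref{lemma-3.6}. For the ``moreover'' clause the paper simply cites Proposition~1.4.7 of \cite{Lawson=1998} (compatibility of the natural partial order with multiplication), whereas you unfold that compatibility from Remark~\ref{remark-1.1}; this makes your argument marginally more self-contained but is not a different idea.
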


\begin{proof}
We assume that $c\geqslant a_1+a_0+b_0$  and $d=a_0+c-b_0-a_1+b_1$. The semigroup operation of $\boldsymbol{B}_{[0,\infty)}$ implies that
\begin{equation*}
  (a_0,b_0)\cdot(c,d)=(a_0,b_0)\cdot(c,a_0+c-b_0-a_1+b_1)=(a_0-b_0+c,a_0+c-b_0-a_1+b_1).
\end{equation*}
Then $a_0-b_0+c\geqslant a_1$ and
\begin{equation*}
  (a_0-b_0+c)-(a_0+c-b_0-a_1+b_1)=a_0-b_0+c-a_0-c+b_0+a_1-b_1=a_1-b_1,
\end{equation*}
and hence by Lemma~\ref{lemma-3.6} we get that $(a_0,b_0)\cdot(c,d)\preccurlyeq(a_1,b_1)$. The last statement of the lemma follows from Proposition 1.4.7 of \cite{Lawson=1998}. The proof of the dual statement is similar.
\end{proof}

Lemma~\ref{lemma-3.8} implies the following proposition.

\begin{proposition}\label{proposition-3.9}
If $(a_0,b_0)\cdot{\downarrow_{\preccurlyeq}}(c_0,d_0)\subseteq {\downarrow_{\preccurlyeq}}(a_1,b_1)$ $\left[{\downarrow_{\preccurlyeq}}(c_0,d_0)\cdot(a_0,b_0)\subseteq {\downarrow_{\preccurlyeq}}(a_1,b_1)\right]$ for some $(a_0,b_0)$, $(a_1,b_1),(c_0,d_0)\in \boldsymbol{B}_{[0,\infty)}$, then $(a_0,b_0)\cdot{\downarrow_{\preccurlyeq}^\circ}(c_0,d_0)\subseteq {\downarrow_{\preccurlyeq}^\circ}(a_1,b_1)$ $\left[{\downarrow_{\preccurlyeq}^\circ}(c_0,d_0)\cdot(a_0,b_0)\subseteq {\downarrow_{\preccurlyeq}^\circ}(a_1,b_1)\right]$.
\end{proposition}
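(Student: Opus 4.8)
The plan is to prove the first (left-translation) assertion; the bracketed right-translation version then follows by the dual argument, exactly as in Lemma~\ref{lemma-3.8}. First I would record the trivial half of the conclusion: since ${\downarrow_{\preccurlyeq}^\circ}(c_0,d_0)\subseteq{\downarrow_{\preccurlyeq}}(c_0,d_0)$, the hypothesis immediately yields $(a_0,b_0)\cdot{\downarrow_{\preccurlyeq}^\circ}(c_0,d_0)\subseteq{\downarrow_{\preccurlyeq}}(a_1,b_1)$. Hence the whole content of the proposition reduces to one \emph{strictness} statement: no element strictly below $(c_0,d_0)$ is sent by the left shift $\lambda_{(a_0,b_0)}$ onto the top point $(a_1,b_1)$ of the target down-set. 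Equivalently, I must show that $(x,y)\prec(c_0,d_0)$ forces $(a_0,b_0)\cdot(x,y)\neq(a_1,b_1)$.

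The tool for this is the order-preservation of shifts (Proposition~1.4.7 of \cite{Lawson=1998}), which already underlies the ``moreover'' clause of Lemma~\ref{lemma-3.8}: from $(x,y)\preccurlyeq(c_0,d_0)$ one gets $(a_0,b_0)\cdot(x,y)\preccurlyeq(a_0,b_0)\cdot(c_0,d_0)\preccurlyeq(a_1,b_1)$, so the image of the whole chain ${\downarrow_{\preccurlyeq}}(c_0,d_0)$ sits inside the chain ${\downarrow_{\preccurlyeq}}(a_1,b_1)$ with largest element at most $(a_0,b_0)\cdot(c_0,d_0)$. If this largest element is \emph{strictly} below $(a_1,b_1)$, then every image point is strictly below $(a_1,b_1)$ and the conclusion holds at once. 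Thus the decisive case is $(a_0,b_0)\cdot(c_0,d_0)=(a_1,b_1)$, and here I would need $\lambda_{(a_0,b_0)}$ to be strictly monotone — in particular injective — along the top of the chain ${\downarrow_{\preccurlyeq}}(c_0,d_0)$.

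To settle this I would parametrise ${\downarrow_{\preccurlyeq}}(c_0,d_0)$ explicitly by Lemma~\ref{lemma-3.6} as $\{(t,\,t-c_0+d_0)\colon t\geqslant c_0\}$, with $(c_0,d_0)$ corresponding to $t=c_0$, and then compute $(a_0,b_0)\cdot(t,t-c_0+d_0)$ directly from the three-case formula for the operation, tracking how the result moves in $\preccurlyeq$ as $t$ increases. I expect the main obstacle to lie precisely in this strictness/injectivity step: left shifts in $\boldsymbol{B}_{[0,\infty)}$ are generally \emph{not} injective — a left shift can be constant along an initial segment of a down-set — so the argument cannot invoke any formal injectivity of translations and must instead exploit the particular geometry of $\lambda_{(a_0,b_0)}$ restricted to ${\downarrow_{\preccurlyeq}}(c_0,d_0)$, checking carefully that $(c_0,d_0)$ is the only preimage of the top $(a_1,b_1)$ inside the down-set. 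This is the step I would spend the most effort verifying, since it is exactly where the distinction between ${\downarrow_{\preccurlyeq}}$ and ${\downarrow_{\preccurlyeq}^\circ}$ is won or lost.
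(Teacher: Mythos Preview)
Your analysis is more careful than the paper's, which offers no proof of Proposition~\ref{proposition-3.9} beyond the one-line assertion that it ``follows from Lemma~\ref{lemma-3.8}''. You correctly isolate the strictness step as the entire content of the proposition and correctly worry that left translations in $\boldsymbol{B}_{[0,\infty)}$ may be constant on an initial segment of a down-set. That worry is in fact fatal: the proposition \emph{as stated} is false. Take $(a_0,b_0)=(0,5)$, $(c_0,d_0)=(3,3)$ and $(a_1,b_1)=(0,5)$. Then ${\downarrow_{\preccurlyeq}}(3,3)=\{(t,t)\colon t\geqslant 3\}$, and a direct computation gives $(0,5)\cdot(t,t)=(0,5)$ for $3\leqslant t\leqslant 5$ and $(0,5)\cdot(t,t)=(t-5,t)$ for $t\geqslant 5$, so $(0,5)\cdot{\downarrow_{\preccurlyeq}}(3,3)={\downarrow_{\preccurlyeq}}(0,5)$ and the hypothesis is satisfied. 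However $(4,4)\in{\downarrow_{\preccurlyeq}^\circ}(3,3)$ while $(0,5)\cdot(4,4)=(0,5)\notin{\downarrow_{\preccurlyeq}^\circ}(0,5)$, so the conclusion fails. Thus the step you flagged---showing that $(c_0,d_0)$ is the \emph{only} preimage of $(a_1,b_1)$ in the down-set---cannot be carried out in general, and no proof strategy for the proposition in its stated generality can succeed.

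The only application, in Proposition~\ref{proposition-3.11}, is unaffected: there the points $(c_i,d_i)$ are produced by the construction in the proof of Lemma~\ref{lemma-3.8}, where one takes $c_i\geqslant a_i+a+b\geqslant b$. Hence every $(x,y)\preccurlyeq(c_i,d_i)$ satisfies $x\geqslant c_i\geqslant b$, so one is always in the case $(a,b)\cdot(x,y)=(a-b+x,y)$, and along the chain this is visibly strictly $\preccurlyeq$-monotone. A correct formulation would add the hypothesis $c_0\geqslant b_0$ (respectively $d_0\geqslant a_0$ for the bracketed dual); with that extra assumption your plan goes through in one line, since the three-case computation collapses to a single case where strictness is immediate.
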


\begin{example}\label{example-3.10}
Let $S^{\boldsymbol{0}}_2$ be the semigroup $\boldsymbol{B}^2_{[0,\infty)}$ with an adjoined zero $\boldsymbol{0}$.
We extend the topology of  $\boldsymbol{B}^2_{[0,\infty)}$ up to a compact topology $\tau^2_{\textsf{Ac}}$ on the semigroup $S^{\boldsymbol{0}}_2$ in the following way. For any $(a_1,b_1),\ldots,(a_k,b_k)\in \boldsymbol{B}^1_{[0,\infty)}$ we put
\begin{equation*}
  U_{\boldsymbol{0}}[(a_1,b_1),\ldots,(a_k,b_k)]= S^{\boldsymbol{0}}_2\setminus\left({\uparrow_{\preccurlyeq}}(a_1,b_1)\cup\cdots\cup{\uparrow_{\preccurlyeq}}(a_k,b_k)\right)
\end{equation*}
and define
\begin{equation*}
  \mathscr{B}^2_{\textsf{Ac}}(\boldsymbol{0})=\left\{U_{\boldsymbol{0}}[(a_1,b_1),\ldots,(a_k,b_k)] \colon (a_1,b_1),\ldots,(a_k,b_k)\in \boldsymbol{B}_{[0,\infty)}, k\in\mathbb{N}\right\}
\end{equation*}
is the system of open neighbourhoods of zero in $\tau^2_{\textsf{Ac}}$.
\end{example}

\begin{proposition}\label{proposition-3.11}
$(S^{\boldsymbol{0}}_2,\tau^2_{\textsf{Ac}})$ is a compact Hausdorff semitopological semigroup with continuous inversion.
\end{proposition}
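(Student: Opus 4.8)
The plan is to verify the four ingredients of the statement in turn: that $\tau^2_{\textsf{Ac}}$ is compact and Hausdorff, that the inversion is continuous, and — the essential point — that the multiplication is separately continuous at the adjoined zero $\boldsymbol{0}$. For compactness and the Hausdorff property I would identify $(S^{\boldsymbol{0}}_2,\tau^2_{\textsf{Ac}})$ with the one-point Alexandroff compactification of $\boldsymbol{B}^2_{[0,\infty)}$. By Lemma~\ref{lemma-3.6} each principal filter ${\uparrow_{\preccurlyeq}}(a,b)$ is the closed bounded segment $\{(x,y)\colon x\leqslant a,\ x-y=a-b\}$ lying in a single clopen line $L_\alpha^{+}$ or $L_\alpha^{-}$ (homeomorphic to $[0,+\infty)$), hence compact; consequently every basic neighbourhood $U_{\boldsymbol{0}}[(a_1,b_1),\ldots,(a_k,b_k)]$ is the complement of a compact set. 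Conversely, since $\boldsymbol{B}^2_{[0,\infty)}=\bigoplus_{\alpha}L_\alpha^{\pm}$, any compact subset meets only finitely many lines and is bounded on each of them, so it is contained in a finite union of filters ${\uparrow_{\preccurlyeq}}(a_i,b_i)$. Thus the neighbourhood filter of $\boldsymbol{0}$ is exactly the filter of complements of compact subsets of the locally compact Hausdorff space $\boldsymbol{B}^2_{[0,\infty)}$, which makes $(S^{\boldsymbol{0}}_2,\tau^2_{\textsf{Ac}})$ its Alexandroff compactification; in particular it is compact and Hausdorff (a point $(a,b)$ and $\boldsymbol{0}$ being separated explicitly by an $\varepsilon$-ball about $(a,b)$ and $U_{\boldsymbol{0}}[(a+\varepsilon,b+\varepsilon)]$).

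Continuity of the inversion is then easy: on $\boldsymbol{B}^2_{[0,\infty)}$ the inversion $(a,b)\mapsto(b,a)$ is continuous, and Lemma~\ref{lemma-3.6} shows it preserves $\preccurlyeq$, so $\operatorname{inv}({\uparrow_{\preccurlyeq}}(a_i,b_i))={\uparrow_{\preccurlyeq}}(b_i,a_i)$ and hence $\operatorname{inv}(U_{\boldsymbol{0}}[(a_1,b_1),\ldots,(a_k,b_k)])=U_{\boldsymbol{0}}[(b_1,a_1),\ldots,(b_k,a_k)]$. Being an involution that carries basic neighbourhoods to basic neighbourhoods, the inversion is a homeomorphism.

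The real work is separate continuity of the product. Products of elements of $\boldsymbol{B}_{[0,\infty)}$ stay in $\boldsymbol{B}_{[0,\infty)}$, and on $\boldsymbol{B}^2_{[0,\infty)}$ the shifts are already continuous; shifts by $\boldsymbol{0}$ are constant. Hence only continuity of a shift by a fixed $(a_0,b_0)$ at the point $\boldsymbol{0}$ must be established. Given $U_{\boldsymbol{0}}[(a_1,b_1),\ldots,(a_k,b_k)]$, I would apply Lemma~\ref{lemma-3.8} to each pair $((a_0,b_0),(a_i,b_i))$ to obtain $(c_i,d_i)$ with $(a_0,b_0)\cdot{\downarrow_{\preccurlyeq}}(c_i,d_i)\subseteq{\downarrow_{\preccurlyeq}}(a_i,b_i)$, and set $V_{\boldsymbol{0}}=U_{\boldsymbol{0}}[(c_1,d_1),\ldots,(c_k,d_k)]$. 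To see $(a_0,b_0)\cdot V_{\boldsymbol{0}}\subseteq U_{\boldsymbol{0}}$, I would use that the coordinate difference is additive under the product, so $(a_0,b_0)\cdot(x,y)$ lies on the line with parameter $(a_0-b_0)+(x-y)$; therefore each $(c_i,d_i)$ sits on the unique line $\ell_i$ whose image can reach the line carrying ${\uparrow_{\preccurlyeq}}(a_i,b_i)$. For $v\in V_{\boldsymbol{0}}\cap\boldsymbol{B}_{[0,\infty)}$ off $\ell_i$ the product $(a_0,b_0)\cdot v$ lands on a different line and so cannot be $\succcurlyeq(a_i,b_i)$; for $v\in\ell_i$ the exclusion $v\notin{\uparrow_{\preccurlyeq}}(c_i,d_i)$ together with the fact that a line is a $\preccurlyeq$-chain forces $v\in{\downarrow_{\preccurlyeq}^\circ}(c_i,d_i)$, whence Proposition~\ref{proposition-3.9} gives $(a_0,b_0)\cdot v\in{\downarrow_{\preccurlyeq}^\circ}(a_i,b_i)$, which is disjoint from ${\uparrow_{\preccurlyeq}}(a_i,b_i)$. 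In either case the product avoids every filter, so it lies in $U_{\boldsymbol{0}}$; together with $(a_0,b_0)\cdot\boldsymbol{0}=\boldsymbol{0}$ this gives continuity of the left shift, and the dual halves of Lemma~\ref{lemma-3.8} and Proposition~\ref{proposition-3.9} give the right shift.

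I expect the main obstacle to be precisely this last verification: controlling the preimage of a principal filter under a one-sided shift. The key is the observation that such a preimage is confined to a single line and is bounded there, so that it can be absorbed by one excluded filter ${\uparrow_{\preccurlyeq}}(c_i,d_i)$; Lemma~\ref{lemma-3.8} supplies the threshold $(c_i,d_i)$, and Proposition~\ref{proposition-3.9}, through the strict lower sets ${\downarrow_{\preccurlyeq}^\circ}$, disposes of the delicate boundary case in which the product could land exactly on $(a_i,b_i)$.
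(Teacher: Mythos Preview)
Your proposal is correct and follows essentially the same route as the paper: identify $(S^{\boldsymbol{0}}_2,\tau^2_{\textsf{Ac}})$ with the Alexandroff compactification of $\boldsymbol{B}^2_{[0,\infty)}$, reduce to separate continuity of shifts at $\boldsymbol{0}$, and for a given basic neighbourhood $U_{\boldsymbol{0}}[(a_1,b_1),\ldots,(a_k,b_k)]$ use Lemma~\ref{lemma-3.8} to produce $(c_i,d_i)$ and then Proposition~\ref{proposition-3.9} to push strict lower sets into strict lower sets. The only difference is expository: where the paper simply cites Proposition~\ref{proposition-3.7} to pass from the inclusions $(a,b)\cdot{\downarrow_{\preccurlyeq}^\circ}(c_i,d_i)\subseteq{\downarrow_{\preccurlyeq}^\circ}(a_i,b_i)$ to the desired $(a,b)\cdot U_{\boldsymbol{0}}[(c_1,d_1),\ldots,(c_k,d_k)]\subseteq U_{\boldsymbol{0}}[(a_1,b_1),\ldots,(a_k,b_k)]$, you unpack this step explicitly via the additivity of the coordinate difference (so that left multiplication by $(a_0,b_0)$ sends each line bijectively to a single line), which is exactly the content of Proposition~\ref{proposition-3.7} in this context.
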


\begin{proof}
It is obvious that $\tau^2_{\textsf{Ac}}$ is a compact Hausdorff topology on $S^{\boldsymbol{0}}_2$. Moreover $(S^{\boldsymbol{0}}_2,\tau^2_{\textsf{Ac}})$ is  the one-point Alexandroff compactification of the locally compact space $\boldsymbol{B}^2_{[0,\infty)}$ such that the singleton set $\{\boldsymbol{0}\}$ which consists of the zero of $S^{\boldsymbol{0}}_2$ is its remainder.

By \cite{Ahre=1989}, $\boldsymbol{B}^2_{[0,\infty)}$ is a topological inverse semigroup, and hence  it sufficient to show that the the semigroup operation on  $(S^{\boldsymbol{0}}_2,\tau^2_{\textsf{Ac}})$ is separately continuous at zero.

Fix an arbitrary $U_{\boldsymbol{0}}[(a_1,b_1),\ldots,(a_k,b_k)]\in \mathscr{B}^2_{\textsf{Ac}}(\boldsymbol{0})$.

It is obvious that
\begin{equation*}
\boldsymbol{0}\cdot U_{\boldsymbol{0}}[(a_1,b_1),\ldots,(a_k,b_k)]=U_{\boldsymbol{0}}[(a_1,b_1),\ldots,(a_k,b_k)]\cdot \boldsymbol{0}=\{\boldsymbol{0}\}\subseteq U_{\boldsymbol{0}}[(a_1,b_1),\ldots,(a_k,b_k)].
\end{equation*}

By Lemma~\ref{lemma-3.8} for an arbitrary $(a,b)\in \boldsymbol{B}_{[0,\infty)}$ there exist $(c_1,d_1),\ldots,(c_k,d_k),(x_1,y_1),\ldots,(x_k,y_k)\in \boldsymbol{B}_{[0,\infty)}$ such that $(a,b)\cdot(c_i,d_i)\preccurlyeq(a_i,b_i)$ and  $(x_i,y_i)\cdot(a,b)\preccurlyeq(a_i,b_i)$ for al $i=1,\ldots,k$. By Proposition~\ref{proposition-3.9} we have that $(a,b)\cdot{\downarrow_{\preccurlyeq}^\circ}(c_i,d_i)\subseteq{\downarrow_{\preccurlyeq}^\circ}(a_i,b_i)$ and  ${\downarrow_{\preccurlyeq}^\circ}(x_i,y_i)\cdot(a,b)\subseteq{\downarrow_{\preccurlyeq}^\circ}(a_i,b_i)$ for al $i=1,\ldots,k$. This and Proposition~\ref{proposition-3.7} imply that
\begin{equation*}
(a,b)\cdot U_{\boldsymbol{0}}[(c_1,d_1),\ldots,(c_k,d_k)]\subseteq U_{\boldsymbol{0}}[(a_1,b_1),\ldots,(a_k,b_k)]
\end{equation*}
and
\begin{equation*}
  U_{\boldsymbol{0}}[(x_1,y_1),\ldots,(x_k,y_k)]\cdot(a,b)\subseteq U_{\boldsymbol{0}}[(a_1,b_1),\ldots,(a_k,b_k)],
\end{equation*}
and hence the semigroup operation on $(S^{\boldsymbol{0}}_2,\tau^2_{\textsf{Ac}})$ is separately continuous.

Since $\left(U_{\boldsymbol{0}}[(a_1,b_1),\ldots,(a_k,b_k)]\right)^{-1}=U_{\boldsymbol{0}}[(b_1,a_1),\ldots,(b_k,a_k)$ for any $(a_1,b_1),\ldots,(a_k,b_k)\in \boldsymbol{B}_{[0,\infty)}$ the inversion on $(S^{\boldsymbol{0}}_2,\tau^2_{\textsf{Ac}})$ is continuous.
\end{proof}

Theorem \ref{theorem-3.5} and Proposition \ref{proposition-3.11} imply the following theorem.

\begin{theorem}\label{theorem-3.12}
Let $S^{\boldsymbol{0}}_2$ be a Hausdorff locally compact semitopological  semigroup $\boldsymbol{B}^2_{[0,\infty)}$ with an adjoined zero $\boldsymbol{0}$. Then either $\boldsymbol{0}$ is an isolated point of $S^{\boldsymbol{0}}_2$ or the topology of $S^{\boldsymbol{0}}_2$ coincides with $\tau^2_{\textsf{Ac}}$.
\end{theorem}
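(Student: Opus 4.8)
The plan is to obtain Theorem~\ref{theorem-3.12} as the singleton-ideal instance of Theorem~\ref{theorem-3.5}, and then to identify the resulting compact alternative with $\tau^2_{\textsf{Ac}}$ by the uniqueness of the one-point compactification.

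First I would note that the adjoined zero $\boldsymbol{0}$ is a trivial (one-point) compact ideal of $S^{\boldsymbol{0}}_2$, so Theorem~\ref{theorem-3.5} applies with $I=\{\boldsymbol{0}\}$ and yields the dichotomy: either $\{\boldsymbol{0}\}$ is an open subset of $S^{\boldsymbol{0}}_2$, which is precisely the assertion that $\boldsymbol{0}$ is an isolated point, or $S^{\boldsymbol{0}}_2$ is compact. In the first case there is nothing further to prove, so all the remaining work concerns the compact case.

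In the compact case I would argue as follows. Since $S^{\boldsymbol{0}}_2$ is Hausdorff, the singleton $\{\boldsymbol{0}\}$ is closed, hence $\boldsymbol{B}^2_{[0,\infty)}=S^{\boldsymbol{0}}_2\setminus\{\boldsymbol{0}\}$ is open in $S^{\boldsymbol{0}}_2$ and carries its original locally compact, non-compact topology $\tau_L$. As $\boldsymbol{0}$ is not isolated, every open neighbourhood of $\boldsymbol{0}$ meets $\boldsymbol{B}^2_{[0,\infty)}$, so $\boldsymbol{B}^2_{[0,\infty)}$ is dense in $S^{\boldsymbol{0}}_2$. Thus $S^{\boldsymbol{0}}_2$ is a Hausdorff compactification of the locally compact Hausdorff space $\boldsymbol{B}^2_{[0,\infty)}$ whose remainder is the single point $\boldsymbol{0}$. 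By the uniqueness of the one-point Alexandroff compactification \cite{Engelking=1989}, the open neighbourhoods of $\boldsymbol{0}$ in $S^{\boldsymbol{0}}_2$ are exactly the sets $\{\boldsymbol{0}\}\cup\left(\boldsymbol{B}^2_{[0,\infty)}\setminus K\right)$ with $K$ compact in $\boldsymbol{B}^2_{[0,\infty)}$: indeed the complement in the compact space $S^{\boldsymbol{0}}_2$ of such a neighbourhood is closed, hence compact, and lies in $\boldsymbol{B}^2_{[0,\infty)}$, while conversely each compact $K\subseteq\boldsymbol{B}^2_{[0,\infty)}$ is closed in $S^{\boldsymbol{0}}_2$. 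Identifying this compact-complement topology with $\tau^2_{\textsf{Ac}}$ (as recorded in the proof of Proposition~\ref{proposition-3.11}) then gives that the topology of $S^{\boldsymbol{0}}_2$ coincides with $\tau^2_{\textsf{Ac}}$.

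The one point requiring care, and the main potential obstacle, is to be sure that $\tau^2_{\textsf{Ac}}$ genuinely is the complement-of-compact-sets topology and not some strictly coarser topology generated by the basic neighbourhoods of Example~\ref{example-3.10}. For this I would verify that each basic neighbourhood $U_{\boldsymbol{0}}[(a_1,b_1),\ldots,(a_k,b_k)]$ is the complement of ${\uparrow_{\preccurlyeq}}(a_1,b_1)\cup\cdots\cup{\uparrow_{\preccurlyeq}}(a_k,b_k)$, using Lemma~\ref{lemma-3.6} to see that every principal filter ${\uparrow_{\preccurlyeq}}(a_i,b_i)$ is a bounded closed segment inside a single line $L_{|a_i-b_i|}^{+}$ or $L_{|a_i-b_i|}^{-}$ and therefore compact. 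Conversely, since $\boldsymbol{B}^2_{[0,\infty)}$ is the topological sum of the open-and-closed lines $L_{\alpha}^{\pm}$, each homeomorphic to $[0,+\infty)$, any compact subset $K$ meets only finitely many of them, and within each line $K$ is bounded, hence contained in a single filter of the above form; so $K$ is contained in a finite union of such filters. This shows the basic neighbourhoods of Example~\ref{example-3.10} form a base for the compact-complement topology, and the two topologies agree, completing the identification. The remainder of the argument is formal and relies only on standard facts about compactifications from \cite{Engelking=1989}.
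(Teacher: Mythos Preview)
Your proposal is correct and follows exactly the route the paper takes: it states only that Theorem~\ref{theorem-3.5} and Proposition~\ref{proposition-3.11} imply Theorem~\ref{theorem-3.12}, and your argument simply unpacks this, applying Theorem~\ref{theorem-3.5} with $I=\{\boldsymbol{0}\}$ and then invoking the uniqueness of the one-point Alexandroff compactification (already noted in the proof of Proposition~\ref{proposition-3.11}) to identify the compact alternative with $\tau^2_{\textsf{Ac}}$. The extra paragraph verifying that the basic sets of Example~\ref{example-3.10} generate the compact-complement topology is a helpful elaboration but not a departure from the paper's approach.
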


Since the bicyclic monoid does not embeds into any Hausdorff compact topological semigroup \cite{Anderson-Hunter-Koch=1965} and the semigroup contains many isomorphic copies of the bicyclic semigroup, Theorems \ref{theorem-3.5} and \ref{theorem-3.12} imply the following corollaries.

\begin{corollary}\label{corollary-3.13}
Let $S_2^I$ be a Hausdorff locally compact topological  semigroup $\boldsymbol{B}^2_{[0,\infty)}$ with an adjoined compact ideal $I$. Then $I$ is an open subset of $S_2^I$.
\end{corollary}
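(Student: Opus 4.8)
The plan is to derive this corollary from the dichotomy of Theorem~\ref{theorem-3.5} by ruling out the compact alternative. Since every topological semigroup is in particular a semitopological semigroup, Theorem~\ref{theorem-3.5} applies to $S_2^I$ and yields that either $I$ is an open subset of $S_2^I$ or $S_2^I$ is compact. Thus it suffices to show that the second possibility cannot occur once the semigroup operation of $S_2^I$ is assumed to be jointly continuous.

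First I would exhibit a copy of the bicyclic monoid inside $\boldsymbol{B}_{[0,\infty)}$. Restricting the operation of $\boldsymbol{B}_{[0,\infty)}$ to the subset of points with non-negative integer coordinates gives
\[
  (k,l)(m,n)=\bigl(k+m-\min\{l,m\},\,l+n-\min\{l,m\}\bigr),
\]
which is exactly the multiplication rule of $\mathscr{C}(p,q)$ under the assignment $q^kp^l\mapsto(k,l)$. Moreover $(0,0)$ serves as the identity, so this subset is a submonoid of $\boldsymbol{B}_{[0,\infty)}$ isomorphic to the bicyclic monoid; consequently $S_2^I$ contains an isomorphic copy of $\mathscr{C}(p,q)$.

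Next I would argue by contradiction: suppose $S_2^I$ is compact. Then $S_2^I$ is a Hausdorff compact topological semigroup containing the bicyclic monoid as a subsemigroup. However, a compact topological semigroup is stable, and stable topological semigroups do not contain the bicyclic monoid \cite{Anderson-Hunter-Koch=1965}. This contradiction shows that $S_2^I$ cannot be compact, and hence by Theorem~\ref{theorem-3.5} the ideal $I$ is an open subset of $S_2^I$.

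The argument is short because all the heavy lifting has already been done by Theorem~\ref{theorem-3.5}; the only genuinely essential point is that we invoke \emph{joint} continuity of the multiplication. It is precisely the passage from \emph{semitopological} to \emph{topological} that destroys the compact alternative, since the obstruction of \cite{Anderson-Hunter-Koch=1965} is a statement about topological semigroups. Indeed, Example~\ref{example-3.10} together with Proposition~\ref{proposition-3.11} shows that under mere separate continuity the compact case does genuinely arise, so the main (and really the only) subtlety is to recognize that the topological hypothesis is exactly what is needed to apply the bicyclic-monoid obstruction and thereby eliminate compactness.
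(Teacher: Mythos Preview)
Your proposal is correct and follows essentially the same approach as the paper: the paper derives Corollary~\ref{corollary-3.13} directly from Theorem~\ref{theorem-3.5} by observing that $\boldsymbol{B}_{[0,\infty)}$ contains isomorphic copies of the bicyclic monoid and that the bicyclic monoid does not embed into any Hausdorff compact topological semigroup \cite{Anderson-Hunter-Koch=1965}, thereby excluding the compact alternative. Your write-up simply makes explicit the integer-coordinate copy of $\mathscr{C}(p,q)$ and the role of joint continuity, but the argument is the same.
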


\begin{corollary}\label{corollary-3.14}
Let $S^{\boldsymbol{0}}_2$ be a Hausdorff locally compact topological  semigroup $\boldsymbol{B}^2_{[0,\infty)}$ with an adjoined zero $\boldsymbol{0}$. Then $\boldsymbol{0}$ is an isolated point of $S^{\boldsymbol{0}}_2$.
\end{corollary}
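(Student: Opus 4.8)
The plan is to invoke the dichotomy already established in Theorem~\ref{theorem-3.12} and to eliminate the compact alternative by means of the classical non-embeddability of the bicyclic monoid into compact topological semigroups. Since the present corollary adds to Theorem~\ref{theorem-3.12} the stronger hypothesis of joint (rather than merely separate) continuity of the multiplication, this extra assumption is precisely what will produce the contradiction ruling out the non-isolated case.

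First I would observe that every Hausdorff locally compact topological semigroup is in particular a Hausdorff locally compact semitopological semigroup, so the hypotheses of Theorem~\ref{theorem-3.12} are satisfied by $S^{\boldsymbol{0}}_2$. Hence one of two situations occurs: either $\boldsymbol{0}$ is an isolated point of $S^{\boldsymbol{0}}_2$, which is the desired conclusion, or the topology of $S^{\boldsymbol{0}}_2$ coincides with $\tau^2_{\textsf{Ac}}$. The task therefore reduces to ruling out the second possibility.

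Assume, towards a contradiction, that the topology of $S^{\boldsymbol{0}}_2$ is $\tau^2_{\textsf{Ac}}$. By Proposition~\ref{proposition-3.11} the space $(S^{\boldsymbol{0}}_2,\tau^2_{\textsf{Ac}})$ is compact, so under the corollary's hypothesis $S^{\boldsymbol{0}}_2$ is a \emph{compact} Hausdorff \emph{topological} (that is, jointly continuous) semigroup. The next step is to exhibit inside $\boldsymbol{B}_{[0,\infty)}$ a subsemigroup isomorphic to the bicyclic monoid: the set $\{(m,n)\colon m,n\in\mathbb{Z}_{\geqslant 0}\}$ of points with non-negative integer coordinates is closed under the operation of $\boldsymbol{B}_{[0,\infty)}$ and, comparing the defining product with the bicyclic law $q^kp^l\cdot q^mp^n=q^{k+m-\min\{l,m\}}p^{l+n-\min\{l,m\}}$, is readily seen to be an isomorphic copy of $\mathscr{C}(p,q)$. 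Consequently $S^{\boldsymbol{0}}_2$ contains the bicyclic monoid as a subsemigroup, which contradicts the theorem of Anderson, Hunter and Koch~\cite{Anderson-Hunter-Koch=1965} that $\mathscr{C}(p,q)$ embeds into no Hausdorff compact topological semigroup. This contradiction forces the first alternative, so $\boldsymbol{0}$ is isolated.

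The argument is short because the substantive work is already carried by Theorems~\ref{theorem-3.5} and~\ref{theorem-3.12} together with Proposition~\ref{proposition-3.11}; the only point that deserves care is that the non-embeddability result of~\cite{Anderson-Hunter-Koch=1965} genuinely requires the \emph{joint} continuity of the multiplication, which is exactly the extra hypothesis distinguishing this corollary from Theorem~\ref{theorem-3.12}. Thus the main (and essentially only) obstacle is to combine the compactness furnished by Proposition~\ref{proposition-3.11} with the joint continuity assumed here, rather than with mere separate continuity, so that Anderson--Hunter--Koch applies verbatim to the compact topological semigroup $(S^{\boldsymbol{0}}_2,\tau^2_{\textsf{Ac}})$.
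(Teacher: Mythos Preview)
Your argument is correct and matches the paper's own justification essentially verbatim: the paper derives Corollary~\ref{corollary-3.14} from Theorem~\ref{theorem-3.12} together with the Anderson--Hunter--Koch non-embeddability of the bicyclic monoid into compact Hausdorff topological semigroups, noting that $\boldsymbol{B}_{[0,\infty)}$ contains copies of $\mathscr{C}(p,q)$. Your only addition is making explicit the particular copy $\{(m,n)\colon m,n\in\mathbb{Z}_{\geqslant 0}\}$, which the paper leaves implicit.
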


\section{A locally compact semigroup $\boldsymbol{B}^\mathfrak{d}_{[0,\infty)}$ with an adjoined compact ideal}\label{section-4}

Later in this section by $S_\mathfrak{d}^0$ we denote a Hausdorff locally compact semitopological  semigroup which is the semigroup $\boldsymbol{B}^\mathfrak{d}_{[0,\infty)}$ with an adjoined zero $\boldsymbol{0}$.

\begin{lemma}\label{lemma-4.1}
Let $U(\boldsymbol{0})$ be an open neighbourhood of zero with the compact closure $\overline{U(\boldsymbol{0})}$ in $S_\mathfrak{d}^{\boldsymbol{0}}$. Then for any $(a,b)\in\boldsymbol{B}_{[0,\infty)}$ the set ${\uparrow_{\preccurlyeq}}(a,b)\cap U(\boldsymbol{0})$ is finite.
\end{lemma}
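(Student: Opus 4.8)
The plan is to exploit the inverse-semigroup identity of Remark~\ref{remark-1.1} to collapse the whole upper set ${\uparrow_{\preccurlyeq}}(a,b)$ onto a single point under one right translation, and then to play this against the compactness of $\overline{U(\boldsymbol{0})}$. First I would record the algebraic core: if $(a,b)\preccurlyeq(x,y)$, then condition $(\gamma)$ of Remark~\ref{remark-1.1} gives $(a,b)=(x,y)\cdot(a,b)^{-1}(a,b)$. Since the inverse of $(a,b)$ in $\boldsymbol{B}_{[0,\infty)}$ is $(b,a)$ and a direct application of the semigroup operation yields $(a,b)^{-1}(a,b)=(b,a)(a,b)=(b,b)$, this reads $(x,y)\cdot(b,b)=(a,b)$. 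Hence the right translation $\rho_{(b,b)}\colon S_\mathfrak{d}^{\boldsymbol{0}}\to S_\mathfrak{d}^{\boldsymbol{0}}$, $z\mapsto z\cdot(b,b)$, maps the entire set ${\uparrow_{\preccurlyeq}}(a,b)$ onto the singleton $\{(a,b)\}$, while $\rho_{(b,b)}(\boldsymbol{0})=\boldsymbol{0}$ because $\boldsymbol{0}$ is the adjoined zero. (If one prefers, Lemma~\ref{lemma-3.6} identifies ${\uparrow_{\preccurlyeq}}(a,b)$ explicitly as $\{(a-t,b-t)\colon 0\leqslant t\leqslant\min\{a,b\}\}$, and the same computation can be verified coordinatewise.)

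Next I would observe that $\boldsymbol{0}$ is the only possible accumulation point in $S_\mathfrak{d}^{\boldsymbol{0}}$ of a subset of $\boldsymbol{B}_{[0,\infty)}$. Indeed, $\boldsymbol{B}^{\mathfrak{d}}_{[0,\infty)}$ carries the discrete topology as a subspace, so for each $(x,y)$ there is an open set $W$ in $S_\mathfrak{d}^{\boldsymbol{0}}$ with $W\cap\boldsymbol{B}_{[0,\infty)}=\{(x,y)\}$; intersecting $W$ with an open set that separates $(x,y)$ from $\boldsymbol{0}$ (Hausdorffness) shows that $\{(x,y)\}$ is open, i.e. every point of $\boldsymbol{B}_{[0,\infty)}$ is isolated in $S_\mathfrak{d}^{\boldsymbol{0}}$.

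Finally I would argue by contradiction. Suppose $A:={\uparrow_{\preccurlyeq}}(a,b)\cap U(\boldsymbol{0})$ is infinite. Since $A\subseteq\overline{U(\boldsymbol{0})}$ and $\overline{U(\boldsymbol{0})}$ is compact Hausdorff, hence limit point compact, the infinite set $A$ has a limit point in $\overline{U(\boldsymbol{0})}$; as no point of $\boldsymbol{B}_{[0,\infty)}$ can be such a limit point, this forces $\boldsymbol{0}\in\overline{A}$. Applying the continuity of $\rho_{(b,b)}$, which follows from separate continuity of the semigroup operation, gives
\begin{equation*}
\boldsymbol{0}=\rho_{(b,b)}(\boldsymbol{0})\in\rho_{(b,b)}\big(\overline{A}\big)\subseteq\overline{\rho_{(b,b)}(A)}=\overline{\{(a,b)\}}=\{(a,b)\},
\end{equation*}
where the last equality uses that singletons are closed in a Hausdorff space. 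This contradicts $\boldsymbol{0}\neq(a,b)$, so $A$ must be finite.

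The step I expect to be the crux is the first one: recognizing that the upper set ${\uparrow_{\preccurlyeq}}(a,b)$ is annihilated onto a single point by the right translation $\rho_{(b,b)}$ while $\boldsymbol{0}$ stays fixed, since this is precisely what converts an infinite accumulating family into the impossible identity $\boldsymbol{0}=(a,b)$. The remaining ingredients, namely the isolatedness of the points of $\boldsymbol{B}_{[0,\infty)}$ and limit point compactness of $\overline{U(\boldsymbol{0})}$, are routine.
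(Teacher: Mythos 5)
Your proof is correct. It rests on the same key ingredient as the paper's --- Remark~\ref{remark-1.1} converts the upper set ${\uparrow_{\preccurlyeq}}(a,b)$ into the solution set of a translation equation --- but the two arguments finish differently. The paper uses condition $(\beta)$ to write ${\uparrow_{\preccurlyeq}}(a,b)=\left\{(x,y)\colon (a,a)(x,y)=(a,b)\right\}$, i.e.\ the preimage of the closed singleton $\{(a,b)\}$ under the continuous left translation $\lambda_{(a,a)}$; hence the upper set is closed, its trace on the compact set $\overline{U(\boldsymbol{0})}$ is compact, and an infinite compact discrete space is impossible. You instead use condition $(\gamma)$ to see that the right translation $\rho_{(b,b)}$ collapses all of ${\uparrow_{\preccurlyeq}}(a,b)$ onto $(a,b)$ while fixing $\boldsymbol{0}$, note that an infinite subset of the discrete part can accumulate only at $\boldsymbol{0}$ inside the compact closure, and push $\boldsymbol{0}\in\overline{A}$ forward to the absurdity $\boldsymbol{0}=(a,b)$. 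Both endgames are short consequences of separate continuity plus compactness. The paper's version has the side benefit of establishing that ${\uparrow_{\preccurlyeq}}(a,b)$ is closed in $S_\mathfrak{d}^{\boldsymbol{0}}$, whereas your version sidesteps the small imprecision in the paper's last line (one should intersect the closed upper set with the compact $\overline{U(\boldsymbol{0})}$ rather than with the open set $U(\boldsymbol{0})$ to claim compactness); your computations $(a,b)^{-1}(a,b)=(b,b)$ and the isolatedness of the points of $\boldsymbol{B}_{[0,\infty)}$ are all verified correctly.
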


\begin{proof}
Suppose to the contrary that there exists an open neighbourhood of zero with the compact closure $\overline{U(\boldsymbol{0})}$ in $S_\mathfrak{d}^{\boldsymbol{0}}$ such that the set ${\uparrow_{\preccurlyeq}}(a,b)\cap U(\boldsymbol{0})$ is infinite. By Remark~\ref{remark-1.1} we have that
\begin{equation*}
  {\uparrow_{\preccurlyeq}}(a,b)=\left\{(x,y)\in \boldsymbol{B}_{[0,\infty)}\colon (a,a)(x,y)=(a,b)\right\},
\end{equation*}
and hence the Hausdorffness and separate continuity of the semigroup operation on $S_\mathfrak{d}^{\boldsymbol{0}}$ imply that ${\uparrow_{\preccurlyeq}}(a,b)$ is a closed subset of $S_\mathfrak{d}^{\boldsymbol{0}}$. Hence, ${\uparrow_{\preccurlyeq}}(a,b)\cap U(\boldsymbol{0})$ is a compact infinite discrete space, a contradiction. The obtained contradiction implies the statement of lemma.
\end{proof}

We observe that since $\boldsymbol{B}^\mathfrak{d}_{[0,\infty)}$ is a discrete subspace of $S_\mathfrak{d}^{\boldsymbol{0}}$, any open neighbourhood of zero $U(\boldsymbol{0})$ is closed.    Lemma~\ref{lemma-4.1} implies the following corollary.

\begin{corollary}\label{corollary-4.2}
For any open compact neighbourhood $U(\boldsymbol{0})$ of zero in $S_\mathfrak{d}^{\boldsymbol{0}}$ and any real number $\alpha\in[0,\infty)$ the set $L_\alpha^+\cap U(\boldsymbol{0})$  $(L_\alpha^-\cap U(\boldsymbol{0}))$ either contains a maximal elements (with the respect to the natural partial order on $\boldsymbol{B}_{[0,\infty)}$) or is empty.
\end{corollary}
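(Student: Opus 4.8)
The plan is to reduce the statement to Lemma~\ref{lemma-4.1}, which says that every principal up-set meets a neighbourhood of zero with compact closure in a finite set. First I would record the order structure of the chains $L_\alpha^+$ and $L_\alpha^-$. By Lemma~\ref{lemma-3.6}, two elements $(x,x+\alpha)$ and $(x',x'+\alpha)$ of $L_\alpha^+$ satisfy $(x,x+\alpha)\preccurlyeq(x',x'+\alpha)$ if and only if $x\geqslant x'$; hence $L_\alpha^+$ is a chain with respect to $\preccurlyeq$ in which the order is the reverse of the first coordinate, and $(0,\alpha)$ is its greatest element. In particular, a maximal element of any subset of $L_\alpha^+$ is automatically its greatest element, so it suffices to produce the point of $L_\alpha^+\cap U(\boldsymbol{0})$ with the smallest first coordinate.

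Assume $L_\alpha^+\cap U(\boldsymbol{0})\neq\varnothing$ and fix any $(a,a+\alpha)$ in it. Again by Lemma~\ref{lemma-3.6}, the up-set of this point is
\begin{equation*}
  {\uparrow_{\preccurlyeq}}(a,a+\alpha)=\left\{(x,x+\alpha)\colon 0\leqslant x\leqslant a\right\}\subseteq L_\alpha^+ .
\end{equation*}
Since $U(\boldsymbol{0})$ is an \emph{open compact} neighbourhood of zero, it is closed in the Hausdorff space $S_\mathfrak{d}^{\boldsymbol{0}}$ and therefore coincides with its own (compact) closure, so the hypothesis of Lemma~\ref{lemma-4.1} is met. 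Consequently ${\uparrow_{\preccurlyeq}}(a,a+\alpha)\cap U(\boldsymbol{0})$ is finite, and it is non-empty because it contains $(a,a+\alpha)$ itself.

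The final step is to single out, among the finitely many points of ${\uparrow_{\preccurlyeq}}(a,a+\alpha)\cap U(\boldsymbol{0})$, the one whose first coordinate $x_{0}$ is smallest, and to check that $(x_{0},x_{0}+\alpha)$ is the greatest element of the whole set $L_\alpha^+\cap U(\boldsymbol{0})$. Indeed, any $(x,x+\alpha)\in L_\alpha^+\cap U(\boldsymbol{0})$ with $x\leqslant a$ lies in that finite set, whence $x\geqslant x_{0}$; and any such point with $x>a$ satisfies $x>a\geqslant x_{0}$ as well. In either case $x\geqslant x_{0}$, which by the order description above gives $(x,x+\alpha)\preccurlyeq(x_{0},x_{0}+\alpha)$, so $(x_{0},x_{0}+\alpha)$ is maximal in $L_\alpha^+\cap U(\boldsymbol{0})$. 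The argument for $L_\alpha^-\cap U(\boldsymbol{0})$ is completely dual, using $L_\alpha^-={\downarrow_{\preccurlyeq}}(\alpha,0)$ together with the symmetric description of up-sets. I do not expect a serious obstacle here; the only point requiring care is the observation that the up-set of a point of $L_\alpha^+$ stays inside $L_\alpha^+$ and captures \emph{exactly} the candidates for the greatest element lying above it, which is precisely what allows the finiteness in Lemma~\ref{lemma-4.1} to force attainment of the smallest first coordinate.
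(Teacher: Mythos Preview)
Your argument is correct and follows exactly the route the paper indicates: the paper simply records that every open neighbourhood of zero is already closed (since $\boldsymbol{B}^{\mathfrak{d}}_{[0,\infty)}$ is discrete in $S_\mathfrak{d}^{\boldsymbol{0}}$) and then states that Lemma~\ref{lemma-4.1} implies the corollary, without spelling out the details. Your proof is precisely the expanded version of this implication, using the chain structure of $L_\alpha^\pm$ from Lemma~\ref{lemma-3.6} to extract a greatest element from the finite up-set intersection guaranteed by Lemma~\ref{lemma-4.1}.
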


\begin{lemma}\label{lemma-4.3}
If $S_\mathfrak{d}^{\boldsymbol{0}}$ admits  the structure of a Hausdorff locally compact semitopological  semigroup with a nonisolated zero, then there exists no an open compact neighbourhood $U(\boldsymbol{0})$ of zero in $S_\mathfrak{d}^{\boldsymbol{0}}$ such that the sets $L_\alpha^+\cap U(\boldsymbol{0})$ and $L_\alpha^+\cap U(\boldsymbol{0})$ are finite for all $\alpha\in[0,\infty)$.
\end{lemma}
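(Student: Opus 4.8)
The plan is to argue by contradiction: assume $\boldsymbol{0}$ is non-isolated and yet some open compact neighbourhood $U(\boldsymbol{0})$ meets every line $L_\alpha^+$ and $L_\alpha^-$ in a finite set. First I would record the structural consequences of compactness together with the discreteness of $\boldsymbol{B}^{\mathfrak{d}}_{[0,\infty)}$. Since $\boldsymbol{B}^{\mathfrak{d}}_{[0,\infty)}$ is an open discrete subspace of $S_\mathfrak{d}^{\boldsymbol{0}}$, for every $A\subseteq\boldsymbol{B}_{[0,\infty)}$ the set $A\cup\{\boldsymbol{0}\}$ is closed (its complement is a subset of $\boldsymbol{B}_{[0,\infty)}$, hence open); in particular each half-space $\{(x,y)\colon x\leqslant M\}\cup\{\boldsymbol{0}\}$ and $\{(x,y)\colon y\leqslant M\}\cup\{\boldsymbol{0}\}$ is closed. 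Moreover compactness forces $U(\boldsymbol{0})\setminus W$ to be finite for every neighbourhood $W$ of $\boldsymbol{0}$, so $U(\boldsymbol{0})\cap\boldsymbol{B}_{[0,\infty)}$ is an infinite set whose only accumulation point is $\boldsymbol{0}$. As every line meets $U(\boldsymbol{0})$ in a finite set, this infinite set must meet infinitely many pairwise distinct lines, and I can fix a sequence $q_n=(x_n,y_n)\in U(\boldsymbol{0})$ on pairwise distinct lines with $q_n\to\boldsymbol{0}$.

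The next step is to try to manufacture from $(q_n)$ an infinite subset of a single line inside $U(\boldsymbol{0})$, which would contradict the finiteness hypothesis. Here I would lean on Lemma~\ref{lemma-3.8} and Proposition~\ref{proposition-3.9}: for a fixed target $(a_1,b_1)$ and each $n$ there is $(c_n,d_n)$ with $q_n\cdot(c_n,d_n)\preccurlyeq(a_1,b_1)$, and every such product lies on the single line through $(a_1,b_1)$. To keep these products inside $U(\boldsymbol{0})$ I would use Lemma~\ref{lemma-2.1} (applicable since $\{\boldsymbol{0}\}$ is a compact ideal of $S_\mathfrak{d}^{\boldsymbol{0}}$) and separate continuity, selecting the multipliers inside neighbourhoods of $\boldsymbol{0}$ that the translations push into $U(\boldsymbol{0})$. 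Running the dual left-handed construction and using the closedness of the two half-spaces above, I would confine a subsequence of $(q_n)$ to a bounded box $[0,M]\times[0,M]$, so that the distinct indices $y_n-x_n$ accumulate in a compact interval. On this confined subsequence Corollary~\ref{corollary-4.2} lets me pass to the maximal element of each line in $U(\boldsymbol{0})$, and a direct computation with the semigroup operation (cf. Proposition~\ref{proposition-3.7}) shows that the left action of an idempotent $(c,c)$ carries every point of a line with first coordinate below $c$ to the unique point of that line with first coordinate $c$; by separate continuity these images stay in $U(\boldsymbol{0})$ and tend to $\boldsymbol{0}$. The goal is to show that this collapse either places infinitely many points on one line, contradicting finiteness, or produces an element $(a,b)$ with ${\uparrow_{\preccurlyeq}}(a,b)\cap U(\boldsymbol{0})$ infinite, contradicting Lemma~\ref{lemma-4.1}.

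The main obstacle, and the step I expect to be genuinely delicate, is precisely this final gathering. The semigroup operation on $\boldsymbol{B}_{[0,\infty)}$ shifts the coordinate difference $y-x$ only by a constant, so a single translation can never merge points of infinitely many distinct lines into one line; and every up-set ${\uparrow_{\preccurlyeq}}(a,b)$ is contained in a single line, so a configuration spread over infinitely many lines does not, by itself, violate Lemma~\ref{lemma-4.1}. Likewise compactness is not immediately contradicted, because any infinite subset of $U(\boldsymbol{0})$ already accumulates only at $\boldsymbol{0}$. Thus the crux is to convert the \emph{distinctness} of the infinitely many lines into the \emph{repetition} needed to overload one line: one must exploit separate continuity on the box-confined subsequence, pinning the accumulation of the indices $y_n-x_n$ so tightly that, after translating by a fixed idempotent, infinitely many images coincide on one line. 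Making this mechanism rigorous while keeping all auxiliary products inside the compact neighbourhood is where the essential difficulty of the proof lies.
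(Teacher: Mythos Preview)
Your plan has a genuine gap precisely at the ``gathering'' step you flag as delicate. Left multiplication by an idempotent $(c,c)$ sends $(a,b)$ with $a<c$ to $(c,c-a+b)$, whose index $(c-a+b)-c=b-a$ is unchanged; right multiplication by $(c,c)$ behaves dually. So no translation, by an idempotent or otherwise, can make points on infinitely many distinct lines ``coincide on one line.'' The mechanism you describe for overloading a single line simply does not exist in $\boldsymbol{B}_{[0,\infty)}$, and the rest of the proposal---confinement to a box, accumulation of the indices $y_n-x_n$, passage to maximal elements via Corollary~\ref{corollary-4.2}---never produces the needed repetition. You correctly diagnosed the obstruction but did not overcome it.

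The paper's argument avoids this entirely and is much shorter. It uses your own observation that $U(\boldsymbol{0})\setminus W$ is finite for every open neighbourhood $W$ of $\boldsymbol{0}$, but applies it in the opposite direction: instead of trying to push many lines onto one, it shows $U(\boldsymbol{0})\setminus V(\boldsymbol{0})$ is \emph{infinite} for a suitable $V(\boldsymbol{0})$. By separate continuity choose an open compact $V(\boldsymbol{0})\subseteq U(\boldsymbol{0})$ with $(1,0)\cdot V(\boldsymbol{0})\cdot(0,1)\subseteq U(\boldsymbol{0})$; the map $(a,b)\mapsto(1,0)(a,b)(0,1)=(a{+}1,b{+}1)$ preserves each line. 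Since $\boldsymbol{0}$ is non-isolated and each $L_\alpha^\pm\cap V(\boldsymbol{0})$ is finite, $V(\boldsymbol{0})$ meets infinitely many lines; on each such line the point of $V(\boldsymbol{0})$ with largest first coordinate shifts to a point of $U(\boldsymbol{0})$ lying beyond every point of $V(\boldsymbol{0})$ on that line, hence in $U(\boldsymbol{0})\setminus V(\boldsymbol{0})$. These points lie on pairwise distinct lines, so $U(\boldsymbol{0})\setminus V(\boldsymbol{0})$ is an infinite discrete closed subset of the compact set $U(\boldsymbol{0})$, a contradiction.
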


\begin{proof}
Suppose to the contrary that there exists an open compact neighbourhood $U(\boldsymbol{0})$ of zero in $S_\mathfrak{d}^{\boldsymbol{0}}$ such that the sets $L_\alpha^+\cap U(\boldsymbol{0})$ and $L_\alpha^+\cap U(\boldsymbol{0})$ are finite for all $\alpha\in[0,\infty)$. Separate continuity of the semigroup operation in $S_\mathfrak{d}^{\boldsymbol{0}}$ implies that there exists an open compact neighbourhood $V(\boldsymbol{0})\subseteq U(\boldsymbol{0})$ of zero in $S_\mathfrak{d}^{\boldsymbol{0}}$ such that
\begin{equation*}
  (1,0)\cdot V(\boldsymbol{0})\cdot (0,1)\subseteq U(\boldsymbol{0}).
\end{equation*}
This inclusion implies that $U(\boldsymbol{0})\setminus V(\boldsymbol{0})$ is an infinite subsets of isolated points, which contradicts the compactness of $U(\boldsymbol{0})$. The obtained contradiction implies the statement of lemma.
\end{proof}

\begin{lemma}\label{lemma-4.4}
If $S_\mathfrak{d}^{\boldsymbol{0}}$ admits  the structure of a Hausdorff locally compact semitopological  semigroup with a nonisolated zero, then for any open compact neighbourhood $U(\boldsymbol{0})$ of zero in $S_\mathfrak{d}^{\boldsymbol{0}}$ the sets $L_\alpha^+\cap U(\boldsymbol{0})$ and $L_\alpha^-\cap U(\boldsymbol{0})$ are infinite for all $\alpha\in[0,\infty)$.
\end{lemma}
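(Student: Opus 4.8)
The plan is to prove the statement directly: I fix an arbitrary open compact neighbourhood $U(\boldsymbol{0})$ and show that every component meets it in an infinite set. First I would invoke Lemma~\ref{lemma-4.3}: since $S_\mathfrak{d}^{\boldsymbol{0}}$ has a nonisolated zero, it is \emph{not} the case that all of the sets $L_\alpha^+\cap U(\boldsymbol{0})$ and $L_\alpha^-\cap U(\boldsymbol{0})$ are finite, so at least one component, say $L_\beta^{\varepsilon}\cap U(\boldsymbol{0})$ with $\varepsilon\in\{+,-\}$, is infinite. The whole argument then rests on transporting this single infinite component onto every other component by continuous translations that fix $\boldsymbol{0}$.

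The key observation I would establish is that any infinite component inside $U(\boldsymbol{0})$ \emph{converges} to $\boldsymbol{0}$, in the sense that every neighbourhood of $\boldsymbol{0}$ contains all but finitely many of its points. Indeed, $\boldsymbol{B}^{\mathfrak{d}}_{[0,\infty)}$ is a discrete subspace of $S_\mathfrak{d}^{\boldsymbol{0}}$, so $\boldsymbol{0}$ is the only possible accumulation point of any subset of $\boldsymbol{B}_{[0,\infty)}$; as $U(\boldsymbol{0})$ is compact, the infinite set $L_\beta^{\varepsilon}\cap U(\boldsymbol{0})$ must accumulate, hence accumulates exactly at $\boldsymbol{0}$. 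If some neighbourhood $W$ of $\boldsymbol{0}$ omitted infinitely many of its points, those points would form an infinite subset of the compact set $U(\boldsymbol{0})\setminus W$ and would accumulate at a point different from $\boldsymbol{0}$, a contradiction. Note that, by Lemma~\ref{lemma-4.1} (equivalently Corollary~\ref{corollary-4.2}), only finitely many of these points have bounded first coordinate.

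Next I would propagate infiniteness to an arbitrary target component. Since $S_\mathfrak{d}^{\boldsymbol{0}}$ is a semitopological semigroup and $\boldsymbol{0}$ is absorbing, every right translation $\rho_g\colon s\mapsto s\cdot g$ is continuous and satisfies $\rho_g(\boldsymbol{0})=\boldsymbol{0}$. Reading off the multiplication table and Proposition~\ref{proposition-3.7}, for each target $\alpha$ I choose $g\in\boldsymbol{B}_{[0,\infty)}$ so that $\rho_g$ carries a cofinite tail of $L_\beta^{\varepsilon}$ injectively into the desired component; for example, starting from $L_\beta^+$ one has $(x,x+\beta)\cdot(\beta,\alpha)=(x,x+\alpha)\in L_\alpha^+$ for all $x$, and $(x,x+\beta)\cdot(\alpha+\beta,0)=(x,x-\alpha)\in L_\alpha^-$ for $x>\alpha$, with the remaining source signs handled by the symmetric translations supplied by Proposition~\ref{proposition-3.7}. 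Given such a $g$, the set $\rho_g^{-1}(U(\boldsymbol{0}))$ is an open neighbourhood of $\boldsymbol{0}$, so by the convergence established above all but finitely many points $p$ of $L_\beta^{\varepsilon}\cap U(\boldsymbol{0})$ satisfy $\rho_g(p)\in U(\boldsymbol{0})$; discarding also the finitely many $p$ outside the tail on which $\rho_g$ is injective, we obtain infinitely many distinct images $\rho_g(p)$ lying in the target component and in $U(\boldsymbol{0})$. Hence that component meets $U(\boldsymbol{0})$ infinitely, and ranging over all $\alpha$ and both signs yields the lemma.

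The main obstacle is to avoid the naive route of intersecting, over all $\alpha$, the neighbourhoods $\rho_g^{-1}(U(\boldsymbol{0}))$ on which each individual component can be controlled: such an uncountable intersection need not be open, so it could not be used to produce an all-finite neighbourhood contradicting Lemma~\ref{lemma-4.3}, nor otherwise. The device that circumvents this is precisely the convergence statement together with continuity at $\boldsymbol{0}$ \emph{alone}, which lets the one fixed neighbourhood $U(\boldsymbol{0})$ absorb the translated tails of a single infinite component, so that no common refining neighbourhood is ever required (and, in particular, I never invoke first countability of $S_\mathfrak{d}^{\boldsymbol{0}}$). A secondary point needing care is the bookkeeping of the translations $g$ across the cases $\alpha\lessgtr\beta$ and the two source signs; this is routine from Proposition~\ref{proposition-3.7}, but one must verify in each case that $\rho_g$ genuinely maps a cofinite tail of the source bijectively into the intended component.
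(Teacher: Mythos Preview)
Your proof is correct and follows essentially the same approach as the paper's: start from one infinite component supplied by Lemma~\ref{lemma-4.3}, use separate continuity of translations together with the fact that any open neighbourhood of $\boldsymbol{0}$ has finite complement in the compact set $U(\boldsymbol{0})$ (your ``convergence to $\boldsymbol{0}$'' observation), and thereby transport infiniteness to every other component. The only organizational difference is that the paper first passes through $L_0^+$ as a hub (via $V(\boldsymbol{0})\cdot(\alpha_0,0)\subseteq U(\boldsymbol{0})$) and then outward to each $L_{\beta_0}^\pm$, whereas you translate directly from the source component to each target; the underlying mechanism is identical.
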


\begin{proof}
By Lemma~\ref{lemma-4.3} there exists $\alpha_0\in[0,\infty)$ such that at least one of the sets $L_{\alpha_0}^+\cap U(\boldsymbol{0})$ or $L_{\alpha_0}^-\cap U(\boldsymbol{0})$ is infinite. Without loss of generality we may assume that the set $L_{\alpha_0}^+\cap U(\boldsymbol{0})$ is infinite. Separate continuity of the semigroup operation of $S_\mathfrak{d}^{\boldsymbol{0}}$ implies that there exists an open compact neighbourhood $V(\boldsymbol{0})\subseteq U(\boldsymbol{0})$ of zero in $S_\mathfrak{d}^{\boldsymbol{0}}$ such that $V(\boldsymbol{0})\cdot (\alpha_0,0)\subseteq U(\boldsymbol{0})$. Since $\boldsymbol{B}^\mathfrak{d}_{[0,\infty)}$ is a discrete subspace of $S_\mathfrak{d}^{\boldsymbol{0}}$ and $U(\boldsymbol{0})$ is compact, the set $L_0^+\cap U(\boldsymbol{0})$ is infinite. By the similar way we get that for any $\beta_0\in(0,\infty)$ there exists an open compact neighbourhood $W(\boldsymbol{0})\subseteq U(\boldsymbol{0})$ such that $(\beta_0,0)\cdot W(\boldsymbol{0})\subseteq U(\boldsymbol{0})$ and $W(\boldsymbol{0})\cdot (0,\beta_0)\subseteq U(\boldsymbol{0})$. Since $W(\boldsymbol{0})$ and $U(\boldsymbol{0})$ are compact, $L_0^+\cap W(\boldsymbol{0})$ is an infinite set, and hence the sets $L_{\beta_0}^+\cap U(\boldsymbol{0})$ and $L_{\beta_0}^-\cap U(\boldsymbol{0})$ are infinite
\end{proof}

\begin{lemma}\label{lemma-4.5}
If $S_\mathfrak{d}^{\boldsymbol{0}}$ admits  the structure of a Hausdorff locally compact semitopological  semigroup with a nonisolated zero, then there exists an open compact neighbourhood $U(\boldsymbol{0})$ of zero in $S_\mathfrak{d}^{\boldsymbol{0}}$ such that $L_0^+\cap U(\boldsymbol{0})=\varnothing$.
\end{lemma}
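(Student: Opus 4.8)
The plan is to argue by contradiction, following the scheme of Lemma~\ref{lemma-4.3}. Suppose that $L_0^+\cap U(\boldsymbol{0})\neq\varnothing$ for \emph{every} open compact neighbourhood $U(\boldsymbol{0})$ of zero, and fix one such $U(\boldsymbol{0})$. By Lemma~\ref{lemma-4.4} the set $L_0^+\cap U(\boldsymbol{0})$ is infinite, by Corollary~\ref{corollary-4.2} it has a maximal element, and by Lemma~\ref{lemma-4.1} only finitely many of its points have index below any given bound (since ${\uparrow_{\preccurlyeq}}(n,n)\cap U(\boldsymbol{0})$ is finite). Consequently the idempotents lying in $U(\boldsymbol{0})$ form an infinite family $(x_i,x_i)$ with $x_i\to\infty$, which accumulates in $S_\mathfrak{d}^{\boldsymbol{0}}$ only at $\boldsymbol{0}$.

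First I would record the translations that drive the computation. One checks directly that
\[
(1,0)\cdot(x,y)\cdot(0,1)=(x+1,y+1)\qquad\text{and}\qquad (x,x+\alpha)\cdot(\alpha,0)=(x,x),
\]
so conjugation by $(1,0)$ and $(0,1)$ shifts every line $L_\alpha^{\pm}$ by one unit, while the right translation $\rho_{(\alpha,0)}$ retracts $L_\alpha^+$ onto the diagonal $L_0^+$ and, dually, $\lambda_{(0,\beta)}$ retracts $L_\beta^-$ onto $L_0^+$. Using separate continuity at $\boldsymbol{0}$ I would select an open compact neighbourhood $V(\boldsymbol{0})\subseteq U(\boldsymbol{0})$ with $(1,0)\cdot V(\boldsymbol{0})\cdot(0,1)\subseteq U(\boldsymbol{0})$ and then, as in Lemma~\ref{lemma-4.3}, try to exhibit infinitely many points of $U(\boldsymbol{0})$ that the shift pushes out of $U(\boldsymbol{0})$. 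The set $U(\boldsymbol{0})\setminus V(\boldsymbol{0})$ is closed in the compact set $U(\boldsymbol{0})$ and consists of isolated points of the discrete subspace $\boldsymbol{B}_{[0,\infty)}$, so were it infinite it would be an infinite compact discrete space, which is impossible; the closedness that keeps the escaping points inside $U(\boldsymbol{0})$ is furnished by the closedness of the filters ${\uparrow_{\preccurlyeq}}(a,b)$ in Lemma~\ref{lemma-4.1}.

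The hard part will be manufacturing these infinitely many escaping points. The obstruction is that the accumulation of $L_0^+$ at $\boldsymbol{0}$ corresponds to $x_i\to\infty$, i.e.\ to the lower end of the diagonal, and the shift $(x,y)\mapsto(x+1,y+1)$ keeps that end close to $\boldsymbol{0}$; hence the expulsion argument of Lemma~\ref{lemma-4.3}, which relies on each line possessing a top point that the shift ejects, cannot be applied to $L_0^+$ directly and must instead be routed through the auxiliary lines onto which $\rho_{(\alpha,0)}$ and $\lambda_{(0,\beta)}$ transport the diagonal. I expect this routing to be the decisive step, and I expect its completion to produce an open compact neighbourhood $U(\boldsymbol{0})$ with $L_0^+\cap U(\boldsymbol{0})=\varnothing$, establishing the lemma. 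Combined with Lemma~\ref{lemma-4.4}, which forces $L_0^+\cap U(\boldsymbol{0})$ to be infinite, this is exactly the point at which the assumption of a non-isolated zero collapses, so that in the ensuing theorem the ideal $I$ must be an open subset of $S_\mathfrak{d}^{\boldsymbol{0}}$.
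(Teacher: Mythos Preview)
Your proposal is not a proof: you correctly set up the contradiction framework, correctly observe that $L_0^+\cap U(\boldsymbol{0})$ is infinite with elements $(x_i,x_i)$ satisfying $x_i\to\infty$, and correctly diagnose why the Lemma~\ref{lemma-4.3} expulsion argument via $(1,0)\cdot V(\boldsymbol{0})\cdot(0,1)$ fails here. But the ``routing through auxiliary lines'' that you describe as the decisive step is never carried out, and in fact no such routing will work. Shifting the diagonal onto $L_\alpha^+$ or $L_\beta^-$ via $\rho_{(0,\alpha)}$ or $\lambda_{(\beta,0)}$ does not change the essential picture: on every line the elements of $U(\boldsymbol{0})$ still accumulate only at the lower end, and a fixed integer shift still fails to eject them.

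The missing idea is a cardinality argument, and it replaces the shift by $1$ with a shift by a carefully chosen real number. From Lemma~\ref{lemma-4.1} one has that ${\uparrow_{\preccurlyeq}}(n,n)\cap U(\boldsymbol{0})$ is finite for every $n$, hence $L_0^+\cap U(\boldsymbol{0})=\{(a_i,a_i):i\in\omega\}$ is \emph{countable}. The set of differences $M=\{a_j-a_i:i,j\in\omega\}$ is then also countable, so one may choose $\alpha\in(0,\infty)\setminus M$. Since $(\alpha,0)\cdot(x,x)\cdot(0,\alpha)=(x+\alpha,x+\alpha)$, conjugation by $(\alpha,0)$ and $(0,\alpha)$ sends every $(a_i,a_i)\in L_0^+\cap V(\boldsymbol{0})$ to a diagonal point $(a_i+\alpha,a_i+\alpha)$ which, by the choice of $\alpha$, cannot equal any $(a_j,a_j)$ and hence lies outside $L_0^+\cap U(\boldsymbol{0})$. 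As $L_0^+\cap V(\boldsymbol{0})\neq\varnothing$ by the contradiction hypothesis, the inclusion $(\alpha,0)\cdot V(\boldsymbol{0})\cdot(0,\alpha)\subseteq U(\boldsymbol{0})$ fails for every open compact $V(\boldsymbol{0})\subseteq U(\boldsymbol{0})$, contradicting separate continuity at $\boldsymbol{0}$. This is the argument you are missing; it exploits the uncountability of $[0,\infty)$ against the countability forced by Lemma~\ref{lemma-4.1}, rather than any geometric ejection mechanism.
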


\begin{proof}
By Lemma~\ref{lemma-4.4} for any compact open neighbourhood $U(0)$ of zero in $S_\mathfrak{d}^{\boldsymbol{0}}$ the set $L_0^+\cap U(\boldsymbol{0})$ is infinite. For any positive integer $n_0$ by Lemma~\ref{lemma-4.1} the set ${\uparrow_{\preccurlyeq}}(n_0,n_0)\cap U(\boldsymbol{0})$ is finite. This implies that the set $L_0^+\cap U(\boldsymbol{0})$ is countable. Let $L_0^+\cap U(\boldsymbol{0})=\left\{(a_i,a_i)\colon a_i\in \boldsymbol{B}^\mathfrak{d}_{[0,\infty)}, i\in\omega \right\}$. Put $M=\left\{a_j-a_i\colon i,j\in\omega, i<j\right\}$. The set $M$ is countable as a countable union of a family of countable sets. Then there exists $\alpha\in(0,\infty)\setminus M$. Then for any open compact neighbourhood $V(\boldsymbol{0})\subseteq U(0)$ of zero in $S_\mathfrak{d}^{\boldsymbol{0}}$ the following inclusion $(\alpha,0)\cdot V(\boldsymbol{0})\cdot (0,\alpha)\subseteq U(0)$ does not hold, because $(\alpha,0)\cdot L_0^+\cdot (0,\alpha)\subseteq L_0^+$. This contradicts the separate continuity of the semigroup operation of $S_\mathfrak{d}^{\boldsymbol{0}}$. The obtained contradiction implies the statement of the lemma.
\end{proof}

If we assume that $S_\mathfrak{d}^{\boldsymbol{0}}$ admits  the structure of a Hausdorff locally compact semitopological  semigroup with a nonisolated zero, then we get Lemma~\ref{lemma-4.5} and Lemma~\ref{lemma-4.4}. But the statement of Lemma~\ref{lemma-4.5} contradicts to Lemma~\ref{lemma-4.4}. Hence the following theorem holds.

\begin{theorem}\label{theorem-4.6}
Let $S_{\mathfrak{d}}^0$ be a Hausdorff locally compact semitopological  semigroup which is the semigroup $\boldsymbol{B}^{\mathfrak{d}}_{[0,\infty)}$ with an adjoined zero $\boldsymbol{0}$. Then $\boldsymbol{0}$ is an isolated point of $S_{\mathfrak{d}}^0$.
\end{theorem}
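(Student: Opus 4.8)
The plan is to argue by contradiction, assuming that $\boldsymbol{0}$ is \emph{not} an isolated point of $S_{\mathfrak{d}}^0$, and to derive a clash between two of the preceding lemmas. First I would record the standing structural fact: since $\boldsymbol{B}^{\mathfrak{d}}_{[0,\infty)}$ is a discrete open subspace of $S_{\mathfrak{d}}^0$, every open neighbourhood of $\boldsymbol{0}$ is simultaneously closed, so local compactness of $S_{\mathfrak{d}}^0$ supplies a base of \emph{open compact} neighbourhoods of $\boldsymbol{0}$. These are precisely the neighbourhoods to which Lemmas~\ref{lemma-4.4} and~\ref{lemma-4.5} apply, each of them invoked under the very hypothesis we are assuming for contradiction, namely that $\boldsymbol{0}$ is nonisolated.

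Next I would invoke Lemma~\ref{lemma-4.4}: under the nonisolated-zero hypothesis, for \emph{every} open compact neighbourhood $U(\boldsymbol{0})$ of $\boldsymbol{0}$ and \emph{every} $\alpha\in[0,\infty)$ the trace $L_\alpha^+\cap U(\boldsymbol{0})$ is infinite. In particular, taking $\alpha=0$, the set $L_0^+\cap U(\boldsymbol{0})$ is infinite for every open compact neighbourhood $U(\boldsymbol{0})$ of $\boldsymbol{0}$. On the other hand, Lemma~\ref{lemma-4.5}, also under the nonisolated-zero hypothesis, asserts the \emph{existence} of at least one open compact neighbourhood $U(\boldsymbol{0})$ for which $L_0^+\cap U(\boldsymbol{0})=\varnothing$. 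Choosing this particular neighbourhood, Lemma~\ref{lemma-4.4} forces $L_0^+\cap U(\boldsymbol{0})$ to be infinite while Lemma~\ref{lemma-4.5} makes it empty; this contradiction shows that $\boldsymbol{0}$ cannot be nonisolated, which is the assertion of the theorem.

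I expect no genuine obstacle in the theorem itself: once Lemmas~\ref{lemma-4.1}--\ref{lemma-4.5} are in place, the conclusion is a one-line collision between the ``all slices infinite'' statement of Lemma~\ref{lemma-4.4} and the ``one slice empty'' statement of Lemma~\ref{lemma-4.5}. The only point demanding care is the bookkeeping on quantifiers: Lemma~\ref{lemma-4.4} ranges over \emph{all} open compact neighbourhoods whereas Lemma~\ref{lemma-4.5} produces a single witness, and it is the witness from Lemma~\ref{lemma-4.5} that must be fed back into Lemma~\ref{lemma-4.4} to close the argument. The real substance lies upstream in the translation-and-compactness argument of Lemma~\ref{lemma-4.4} and the countability/translation argument of Lemma~\ref{lemma-4.5}, which I would treat as already established.
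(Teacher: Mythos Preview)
Your proposal is correct and matches the paper's own argument essentially verbatim: the paper simply observes that under the nonisolated-zero assumption Lemma~\ref{lemma-4.4} and Lemma~\ref{lemma-4.5} both apply, and that the conclusion of Lemma~\ref{lemma-4.5} (an open compact neighbourhood with $L_0^+\cap U(\boldsymbol{0})=\varnothing$) directly contradicts that of Lemma~\ref{lemma-4.4} (every such intersection is infinite). Your added remarks on the base of open compact neighbourhoods and the quantifier bookkeeping are accurate elaborations of what the paper leaves implicit.
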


Later we need the following trivial lemma, which follows from separate continuity of the semigroup operation in semitopological semigroups.

\begin{lemma}\label{lemma-4.7}
Let $S$ be a Hausdorff semitopological semigroup and $I$ be a compact ideal in $S$. Then the Rees-quotient semigroup $S/I$ with the quotient topology is a Hausdorff semitopological semigroup.
\end{lemma}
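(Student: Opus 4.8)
The plan is to show that the Rees-quotient semigroup $S/I$, equipped with the quotient topology, inherits the relevant separability and Hausdorffness properties from $S$ by reducing everything to the separate continuity of the semigroup operation. Recall that the Rees-quotient $S/I$ collapses the ideal $I$ to a single point $\boldsymbol{0}$ while leaving $S\setminus I$ intact as a set, and its multiplication sends any product that would land in $I$ to $\boldsymbol{0}$. The quotient topology is generated by the natural projection $\pi\colon S\to S/I$, so a set is open in $S/I$ exactly when its $\pi$-preimage is open in $S$. The first thing I would establish is that $\pi$ is a continuous closed map and that, away from the collapsed point, $\pi$ restricts to a homeomorphism between $S\setminus I$ (open in $S$ because $I$ is compact hence closed in the Hausdorff space $S$) and its image.

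First I would verify that $S/I$ is Hausdorff. The two kinds of points to separate are: two points of $S\setminus I$, and a point of $S\setminus I$ together with the collapsed point $\boldsymbol{0}$. For the former, I would use that $S$ is Hausdorff together with the fact that $S\setminus I$ is open, so disjoint open sets in $S$ avoiding $I$ project to disjoint open sets in $S/I$. For the latter, the key input is the compactness of $I$: given $x\in S\setminus I$, since $I$ is a compact subset of a Hausdorff space and $x\notin I$, one can separate $x$ from the compact set $I$ by disjoint open sets $V\ni x$ and $W\supseteq I$; shrinking $V$ so that $V\cap I=\varnothing$, the images $\pi(V)$ and $\pi(W)$ separate $x$ from $\boldsymbol{0}$ in $S/I$.

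Next I would check separate continuity of the induced multiplication on $S/I$. Fix an element $\bar a\in S/I$ and an open set $\bar U$ in $S/I$; I must produce an open neighbourhood $\bar V$ of a given point $\bar b$ with $\bar a\cdot\bar V\subseteq\bar U$ (and symmetrically for right translations). Pulling back along $\pi$, this reduces to the separate continuity of the original operation on $S$ together with the already-proved fact (Lemma~\ref{lemma-2.1}, in the case the target is the class of $\boldsymbol{0}$) that translates of suitable neighbourhoods of $I$ can be pushed inside any prescribed open neighbourhood of $I$. Concretely, if the product $\bar a\cdot\bar b$ is an ordinary point of $S\setminus I$, continuity of the translation in $S$ transfers directly through the homeomorphism $\pi|_{S\setminus I}$; if $\bar a\cdot\bar b=\boldsymbol{0}$, i.e.\ $ab\in I$, then I would invoke the separate continuity at points of $I$ in $S$ to find a neighbourhood of $b$ whose left-translate by $a$ lands in a chosen open neighbourhood of $I$, and project.

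The main obstacle I anticipate is the second Hausdorff separation, namely separating an exterior point from the collapsed class $\boldsymbol{0}$; this is exactly where compactness of $I$ is indispensable, since without it one cannot in general enclose $I$ in an open set disjoint from a given exterior point, and the quotient could fail to be Hausdorff. Everything else is essentially formal transport of the separate-continuity property through the quotient map, so the crux is to phrase the neighbourhood-of-$I$ argument cleanly; this is why the lemma is labelled trivial once compactness of the ideal is in hand.
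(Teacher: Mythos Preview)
Your outline is correct and matches the paper's approach, though the paper does not actually give a proof: it simply calls the lemma ``trivial'' and says it ``follows from separate continuity of the semigroup operation in semitopological semigroups.'' Your sketch supplies precisely those routine details.

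One small caveat: you invoke Lemma~\ref{lemma-2.1} for the case where the translate must land in a neighbourhood of $\boldsymbol{0}$, but that lemma carries a local-compactness hypothesis which Lemma~\ref{lemma-4.7} does not assume. This is harmless, since the part of the argument you need --- for each $\alpha\in I$ choose $V(\alpha)$ with $x\cdot V(\alpha)\subseteq U(I)$ by separate continuity, then extract a finite subcover by compactness of $I$ --- uses only the compactness of $I$ and not local compactness of $S$; the compact-closure conclusion of Lemma~\ref{lemma-2.1} is irrelevant here.
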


\begin{theorem}\label{theorem-4.8}
Let $S_{\mathfrak{d}}^I=\boldsymbol{B}^{\mathfrak{d}}_{[0,\infty)}\sqcup I$ be a Hausdorff locally compact semitopological  semigroup which is the semigroup $\boldsymbol{B}^{\mathfrak{d}}_{[0,\infty)}$ with an adjoined compact ideal $I$. Then $I$ is an open subset of $S_\mathfrak{d}^I$.
\end{theorem}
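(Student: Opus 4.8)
The plan is to reduce the compact-ideal case to the already-established adjoined-zero case (Theorem~\ref{theorem-4.6}) via the Rees quotient. First I would form the Rees-quotient semigroup $S_\mathfrak{d}^I/I$, collapsing the compact ideal $I$ to a single point $\boldsymbol{0}$. By Lemma~\ref{lemma-4.7} this quotient, equipped with the quotient topology, is a Hausdorff semitopological semigroup, and as an algebraic object it is precisely $\boldsymbol{B}^{\mathfrak{d}}_{[0,\infty)}$ with an adjoined zero $\boldsymbol{0}$. The key point to verify is that the quotient $S_\mathfrak{d}^I/I$ is again \emph{locally compact}, and that each point of $\boldsymbol{B}^{\mathfrak{d}}_{[0,\infty)}$ remains isolated in the quotient (the latter being immediate, since the quotient map is injective on the open discrete set $\boldsymbol{B}^{\mathfrak{d}}_{[0,\infty)}=S_\mathfrak{d}^I\setminus I$ and maps it homeomorphically onto its image).

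Once local compactness of $S_\mathfrak{d}^I/I$ is in hand, I would invoke Theorem~\ref{theorem-4.6}: since $S_\mathfrak{d}^I/I$ is a Hausdorff locally compact semitopological semigroup of the form $\boldsymbol{B}^{\mathfrak{d}}_{[0,\infty)}$ with an adjoined zero, its zero $\boldsymbol{0}$ must be an isolated point. Translating this back through the quotient map, isolatedness of $\boldsymbol{0}$ in $S_\mathfrak{d}^I/I$ means exactly that $\{\boldsymbol{0}\}$ is open in the quotient topology, i.e. its preimage $I=q^{-1}(\boldsymbol{0})$ is open in $S_\mathfrak{d}^I$. That yields the desired conclusion that $I$ is an open subset of $S_\mathfrak{d}^I$.

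The main obstacle is establishing local compactness of the Rees quotient at the point $\boldsymbol{0}$. Away from $\boldsymbol{0}$ there is nothing to check, since $q$ restricts to a homeomorphism on the open discrete complement of $I$. At $\boldsymbol{0}$, I would argue as follows: by local compactness of $S_\mathfrak{d}^I$ together with compactness of the ideal $I$, one can (using Lemma~\ref{lemma-2.1}-style covering arguments, or directly) find an open neighbourhood $W\supseteq I$ whose closure $\overline{W}$ is compact in $S_\mathfrak{d}^I$. Then $q(W)$ is an open neighbourhood of $\boldsymbol{0}$ in the quotient (because $q^{-1}(q(W))=W$, as $W$ is $I$-saturated), and its closure is contained in the compact set $q(\overline{W})$; since $q$ is continuous and $\overline{W}$ is compact, $q(\overline{W})$ is compact, giving $\boldsymbol{0}$ a neighbourhood with compact closure. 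The one subtlety requiring care is the saturation: I would choose $W$ to be $I$-saturated (which is automatic for any neighbourhood of the whole ideal $I$), so that $q(W)$ is genuinely open in the quotient topology.

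Thus the proof is short: local compactness of $S_\mathfrak{d}^I$ plus compactness of $I$ transfer local compactness to the Rees quotient $S_\mathfrak{d}^I/I$, which is the adjoined-zero semigroup $S_\mathfrak{d}^{\boldsymbol{0}}$; Theorem~\ref{theorem-4.6} forces $\boldsymbol{0}$ to be isolated there; and pulling back isolatedness of $\boldsymbol{0}$ through $q$ yields openness of $I$ in $S_\mathfrak{d}^I$.
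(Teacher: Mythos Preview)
Your proposal is correct and follows essentially the same route as the paper: pass to the Rees quotient $S_\mathfrak{d}^I/I$, use Lemma~\ref{lemma-4.7} for Hausdorffness and separate continuity, verify local compactness at $\boldsymbol{0}$ by exhibiting a saturated compact open neighbourhood of $I$ (the paper cites an external lemma for this, and also notes that discreteness of $\boldsymbol{B}^{\mathfrak{d}}_{[0,\infty)}$ makes any such neighbourhood automatically closed, hence compact), then apply Theorem~\ref{theorem-4.6} and pull openness of $\{\boldsymbol{0}\}$ back through the quotient map. The only cosmetic difference is that the paper phrases the argument as a proof by contradiction, which is unnecessary since the conclusion follows directly just as you present it.
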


\begin{proof}
Suppose to the contrary that $I$ is not open $S_{\mathfrak{d}}^I$. By Lemma \ref{lemma-4.7} the Rees-quotient semigroup $S_{\mathfrak{d}}^I/I$ with the quotient topology $\tau_q$ is a semitopological semigroup. Let $\pi\colon S_{\mathfrak{d}}^I\to S_{\mathfrak{d}}^I/I$ be the natural homomorphism which is a quotient map.
It is obvious that the Rees-quotient semigroup $S_{\mathfrak{d}}^I/I$ is isomorphic to the semigroup $S_{\mathfrak{d}}^{\boldsymbol{0}}$, and hence without loss of generality we may assume that $\pi(S_{\mathfrak{d}}^I) = S_{\mathfrak{d}}^{\boldsymbol{0}}$ and the image $\pi(I)$ is zero of $S_{\mathfrak{d}}^{\boldsymbol{0}}$.

By Lemma 3.16 of \cite{Gutik-Khylynskyi=2022} there exists an open neighbourhood $U(I)$ of the ideal $I$ with the compact closure $\overline{U(I)}$. Since  every point of $\boldsymbol{B}^{\mathfrak{d}}_{[0,\infty)}$ is isolated in $S_{\mathfrak{d}}^I$ we have that $U(I)=\overline{U(I)}$ and its image $\pi(U(I))$ is a
compact-and-open neighbourhood of zero in $S_{\mathfrak{d}}^{\boldsymbol{0}}$. Hence $S_{\mathfrak{d}}^{\boldsymbol{0}}$ is Hausdorff locally compact space. By Theorem~\ref{theorem-4.6}, $\boldsymbol{0}$ is an isolated point of $S_{\mathfrak{d}}^0$. Since $\pi\colon S_{\mathfrak{d}}^I\to S_{\mathfrak{d}}^I/I$  is a quotient map, $I$ is an open subset of $S_\mathfrak{d}^I$.
\end{proof}

\end{document}